\newcommand{\rom}[1]{\expandafter\@slowromancap\romannumeral #1@}
\numberwithin{equation}{section}
\begin{document}
\title{Existence of Solutions for Non-Autonomous Second-Order Stochastic Inclusions with Clarke's Subdifferential and non Instantaneous Impulses}
\author{Anjali Upadhyay\and Surendra Kumar  }

\institute{Anjali Upadhyay \at  Faculty of Mathematical Sciences, Department of Mathematics, University of Delhi, New Delhi, 110007\\
      \email{upadhyayanjali123@gmail.com} \and Surendra Kumar \at
              Faculty of Mathematical Sciences, Department of Mathematics, University of Delhi, New Delhi, 110007 \\
              \email{mathdma@gmail.com} }
\maketitle
\begin{abstract}
This manuscript explores a new class of non-autonomous second-order stochastic inclusions of Clarke's subdifferential form with non-instantaneous impulses (NIIs), unbounded delay and the Rosenblatt process in Hilbert spaces. The existence of a solution is deduced by employing a fixed point strategy for a set-valued map together with the evolution operator and stochastic analysis approach.  An example is analyzed for theoretical developments.
\end{abstract}
\keywords{Second-order non-autonomous system \and stochastic evolution inclusions \and Fixed point \and Rosenblatt process  \and Infinite delay \and Clarke`s generalized subdifferential \and Mild solution}
\subclass{34A12 \and 34A37 \and 34A60 \and 35K30 \and 60H10}

\baselineskip12pt
\section{Introduction}
 Due to its practical applications in several fields, for instance, finance, physics, electrical engineering, medicine, and telecommunication, among others, many scholars have addressed stochastic evolution equations, and have already gained several fruitful results.
 To explore more, we refer to the following books and articles and the references cited therein \cite{Stochastic,Mao,Prato1992,Ren&Sun}.
  In numerous regions of research, there has been a growing revenue in the evaluation of the frameworks fusing memory, i.e.,
 there is the impact of postponement on state conditions. In this way, there is a genuine desire to talk about stochastic differential systems with delay.

In abstract spaces, Henr\'{i}quez \cite{Henriquez2011} assessed the presence of mild solutions as well as classical solutions
for a non-autonomous second-order (NASO) delayed functional differential equation with unbounded delay.
Henr\'{i}quez et~al. \cite{Henriquezetal2014} considered NASO differential structure with nonlocal initial data and developed the existence of solutions by applying the principle of the Hausdorff measure of non-compactness.
   Benchohra et~al. \cite{Benchohraetal2019} used a fixed point theorem developed by Darbo with the Kuratowski measure of non-compactness to build certain adequate conditions that guarantee the presence of a solution for a NASO non-instantaneous integro differential system.

 Because of its easy calculus and interesting attributes, the fractional Brownian motion (fBm) has attracted to many scholars. One can go through the works in \cite{Prato1992, BS,CG} for further details. In certain cases, where the mechanism is not Gaussian, the Rosenblatt process is chosen over fBm.  Although the theory of the Rosenblatt process was established during 60's and 70's, significant development has been made in the last decade because of its self-similarity, long-range dependency, stationary increments.

 In the literature, there are numerous articles which studied various theoretical facets of the Rosenblatt process.
 Ouahra et~al. \cite{Ouahraetal2017} discussed the qualitative properties of an stochastic  delayed neutral functional differential system  including impulses, Poisson jump and the Rosenblatt process.
 Leonenko and Ahn \cite{NV} gave a fruitful result for the rate of convergence of the Rosenblatt process.
 The distribution property of the Rosenblatt process was investigated by Maejma and Tudor \cite{MC}.
 Sakthivel et~al.\cite{Sakthiveletal2018}  analysed an abstract NASO stochastic evolution model with unbounded delay governed by the Rosenblatt process and proved its existence using the Krasnoselskii—Schaefer fixed point theorem, also studied the related autonomous system with bounded delay.

 Lakhel and Tlidi \cite{Lakhel&Tlidi2019} employed the Banach fixed point theorem to
discuss the existence, uniqueness, and established stability criteria for a neutral stochastic
functional differential system with impulses involving variable delays governed by the Rosenblatt process.

On the other hand,  Clarke's subdifferential emerges from the applied discipline,
 namely  thermo-viscoelasticity, filtration in porous materials, riveting applications in optimization
 and non-smooth analysis \cite{1,Lu&Bin}.
 Recently, Vijayakumar \cite{VV} considered NASO stochastic inclusions of
 Clarke's subdifferential form and established the approximate controllability for the proposed systems.

 Hernandez and O'Regan \cite{Hernandez&Regan} introduced the theory of NIIs.
Thereafter, many researchers gave various results on differential equations with NIIs.
Pierri et~al. \cite{Pierri&O'Regan}, Yu and Wang \cite{Yu&Wang}, Fekan and Wang \cite{Michal&Wang}, many more \cite{Yan&Jia,Malik&Kumar} studied various qualitative
properties of differential systems with NIIs. To the best of our incite, no result guarantees
the existence of a solution for a NASO stochastic  differential inclusions with Clarke's subdifferential including the Rosenblatt process and NIIs.

Let $\mathfrak{Z}$ and $(\mathscr{Z},\|\cdot\|_\mathscr{Z}, \langle\cdot\rangle_\mathscr{Z})$ be Hilbert spaces that are real and separable. The notation $L(\mathscr{Z},\mathfrak{Z})$ reflects the space of all bounded linear operators from $\mathscr{Z}$ into $\mathfrak{Z}$. Strongly motivated by the above facts and discussions, we examine the subsequent stochastic differential inclusion with unbounded delay and NIIs
\begin{align}\label{mainequation}
\left\{
  \begin{array}{ll}
   d\chi'(\tau) \in [ \mathscr{A}(\tau)\chi(\tau)+ \partial\Sigma(\tau,\chi(\tau))]d\tau +q(\tau,\chi_\tau)dZ_H(\tau), &  \tau\in \mathop{\cup}\limits_{k=0}^\mathcal{M}(t_k,r_{k+1}]; \\
    \chi(\tau)= f_k(\tau,\chi_\tau),& \tau\in \mathop{\cup}\limits_{k=1}^\mathcal{M}(r_k,t_k];\\
\chi'(\tau)= g_k(\tau,\chi_\tau),& \tau\in \mathop{\cup}\limits_{k=1}^\mathcal{M}(r_k,t_k];\\
\chi(\tau)= \eta(\tau), & \tau\in ]-\infty,0];\\
\chi'(0)=\xi,
  \end{array}
\right.
\end{align}
where, $\chi(\cdot)$ is $\mathfrak{Z}$-valued stochastic process, $\mathscr{A}(\tau): D(\mathscr A(\tau))\subseteq \mathfrak{Z} \rightarrow \mathfrak{Z}$ is  closed and linear whose domain is dense in $\mathfrak{Z}$. The $\tau-$segment of $\chi$, $\chi_\tau :]- \infty,0] \rightarrow \mathfrak{Z}$ is given by  $\chi_\tau(\theta) = \chi(\tau+\theta);\ \theta \in]-\infty,0]$, and belongs to an abstract phase space $\mathscr{W}$ described in Sect. \ref{Sect.2}. Let $\mathfrak{J} = [0,\beta],\  \mathfrak{J}_0=]-\infty,0]$. The notation $\partial \Sigma$ represents the Clarke generalized subdifferential (see \cite{1}) of a locally Lipschitz function $\Sigma(\tau,\cdot): \mathfrak{Z} \rightarrow \mathds{R}$; $q:\mathfrak{J} \times \mathscr{W} \rightarrow  L_2^0,\ f_k,g_k :(r_k,t_k] \times \mathscr{W} \rightarrow \mathfrak{Z},\ k=1,\hdots, \mathcal{M}$ are suitable functions.  The initial data $\eta$ is $\Gamma_0$-measurable $\mathscr{W}$-valued stochastic process and $\xi$ is $\Gamma_0$-measurable $\mathfrak{Z}$-valued stochastic process. Also $\eta$ and $\xi$ have finite second moment, and are independent of the Rosenblatt process $Z_H$.

 The points $0=r_0<t_0<r_1 <t_1 < \hdots <t_\mathcal{M} <r_{\mathcal{M}+1}=\beta$ are impulsive positions. The impulses begin abruptly at $r_k$ and continue to have an impact on $(r_k,t_k]$. The function $\chi(\cdot)$ takes distinct values in the two subintervals $(r_k,t_k], \ (t_k,r_{k+1}]$ and is continuous at $t_k$.

The following is the summary of the rest of the manuscript: Sect. \ref{Sect.2} is devoted to basic results, concepts, and Lemmas. Existence result  for the proposed system (\ref{mainequation}) is covered in Sect. \ref{Sect.3}  by  using set-valued(multi-valued) fixed point theorem \cite{2}. We have reserved Sect. \ref{Sect.4} for an example to show the applicability of the acquired result.
\section{Preliminaries}\label{Sect.2}
 Consider the probability space $(\Omega,\Gamma,\{\Gamma_\tau\}_{\tau\geq 0},\mathbb{P})$ that is complete with the
 right continuous increasing sub $\sigma$-algebras $\{\Gamma_\tau\}_{\tau \in \mathfrak{J}}$ with $\Gamma_\tau \in \Gamma$
  generated by all $\mathbb{P}$-null sets and $\{Z_H(t), t \in [0,\tau]\}$; where
$Z_H(\tau)$ represents the Rosenblatt process on  $\mathscr{Z}$, and $H\in (\frac{1}{2},1)$ .
Let $L_2(\Omega,\mathfrak{Z})$ be the Banach space of strongly measurable, $\mathfrak{Z}$-valued random variables
with the norm $\|\chi(\cdot)\|_{L_2}=(\mathbb{E}\|\chi(\cdot)\|^2_\mathfrak{Z})^\frac{1}{2}$,
where $\mathbb{E}\|\chi\|^2=\int_\Omega \|\chi\|^2 d\mathbb{P}<\infty$. Let  $\{ e_i\}_{i=1}^{\infty}$ be a complete orthonormal basis for $\mathscr{Z}$.
The operator $Q\in L(\mathscr{Z})$ is defined by $Qe_i=\nu _ie_i , $ $i\in \mathds{N}$ with trace
$Tr(Q)=\mathop  {\sum}\limits_{i=1}^{\infty}\nu_i<\infty;$ $\nu_i\geq 0$. Denote a sequence of mutually independent Rosenblatt processes by $\{z_i(\tau)\}_{i}^{\infty}$ on $(\Omega,\Gamma,\mathbb{P})$, which are two-sided and one-dimensional. A $\mathscr{Z}$-valued stochastic
process $Z_Q(\ell)$ is defined as
\begin{align*}
Z_Q(\ell)=\mathop {\sum}\limits_{i=1}^{\infty}z_i(\ell)Q^\frac{1}{2}e_i,~ \ell \geq 0.
\end{align*}
Moreover, the above series is convergent in $\mathscr{Z}$ if $Q \geq 0$ and $Q=Q^*$(adjoint of $Q$).

Let $\mathscr{Z}_0=Q^\frac{1}{2}\mathscr{Z}$ be the Hilbert space with the inner product
$\langle z_1,z_2\rangle_{\mathscr{Z}_0}=\langle Q^\frac{1}{2}z_1, Q^\frac{1}{2} z_2 \rangle_\mathscr{Z}$.
Further, let $L_2(\mathscr{Z}_0,\mathfrak{Z}):=L_2^0$ be the space of Hilbert--Schmidt operators from $\mathscr{Z}_0$ into $\mathfrak{Z}$.
Clearly $L_2^0$ equipped with the inner product $\langle\Phi_1,\Phi_2\rangle=\mathop {\sum}\limits_{i=1}^{\infty} \langle\Phi_1e_i,\Phi_2 e_i\rangle$
is a Hilbert space. Moreover, $\|\Phi\|_{L^2_0}^2=\|\Phi Q^{\frac{1}{2}}\|^2=Tr(\Phi Q \Phi^*)$.

Let $\mathfrak{J}$ be the interval with given horizon $\beta$. Then the one dimensional Rosenblatt process is represented by \cite{3}
\begin{align*}\label{3}
Z^{\alpha}_H(\tau)= c(H)\int_0^\tau \int_0^\tau \Big[\int_{b_1 \vee b_2}^\tau \frac{\partial K^{\hat{H}}}{\partial u}(u,b_1)\frac{\partial K^{\hat{H}}}{\partial u}(u,b_2) ds \Big]dB(b_1)dB(b_2)
\end{align*}
where $B=\{B(\tau): \tau \in \mathfrak{J}\}$ is the Wiener process,
$\hat{H}=\frac{H+1}{2}, \ c(H)=\frac{1}{H+1} \sqrt{\frac{H}{2(2H-1)}}$ and the kernel $K^H(\cdot,\cdot)$ is  given by
\begin{align*}
K^H(\ell,s)=
\begin{cases}
c_H \ s^{\frac{1}{2}-H} \int_s^\ell (u-s)^{H-\frac{3}{2}}u^{H-\frac{1}{2}}du,\ \ell >s;\\
0,\hspace{4.5cm} \ell \leq s
\end{cases}
\end{align*}
 where $c_H=\sqrt{\frac{H(2H-1)}{\mathfrak{B}(2-2H,H-\frac{1}{2})}};$ $\mathfrak{B}(\cdot,\cdot)$ represents the Beta function.
The space $PC(\mathfrak{Z})$ formed by all $\mathfrak{Z}$-valued stochastic processes $\{\chi(\tau) : \tau\in\mathfrak{J}\}$ that are $\Gamma_\tau$-adapted measurable, with $\chi$ is continuous at $\tau \neq r_k, \  \chi_{r_k}= \chi_{r_k^- },$ and $ \chi_{r_k^+ }$ exist for $k = 1,...,\mathcal{M}$ is a Banach space with $\|\chi\|_{PC} = \big(\mathop{\sup}\limits _{\beta \geq s \geq 0} \ \mathbb{E}\|\chi(s)\|^2\big)^\frac{1}{2}$.

The phase space $(\mathscr{W}, \|\cdot\|_{\mathscr{W}})$ formed by all $\Gamma_0$-measurable mappings from
$\mathfrak{J}_0$ into $\mathfrak{Z}$ is a semi-normed linear space and the accompanying axioms hold (cf.\cite{Hino,Hale&Kato1978})
\begin{enumerate}[(i)]
\item
If $\chi: \ ]-\infty, \beta] \rightarrow \mathfrak{Z},$ $\beta>0$, is such that $\chi|_{[0,\beta]} \in  PC([0,\beta],\mathfrak{Z})$ with $\chi_0 \in \mathscr{W}$, then for all $\tau \in [0,\beta]$ following  hold:
\begin{enumerate}
\item $\chi_\tau \in \mathscr{W}$; \
 \item $\|\chi(\tau)\| \leq J \|\chi_\tau\|_{\mathscr{W}}$, where $J>0$ is a constant;\
 \item $\|\chi_\tau\|_{\mathscr{W}} \leq \  K(\tau) \sup \{\|\chi(s)\|:\ \tau \geq s \geq 0 \} + L(\tau)\|\chi_0\|_\mathscr{W}$,\\
  where $K, L : \mathds{R}^+\cup\{0\} \rightarrow [1,\infty)$, $K$  and $L$ are continuous and  locally bounded respectively, and independent of $\chi(\cdot)$.
\end{enumerate}
\item
  $\mathscr{W}$ is complete space.
\end{enumerate}
The result given below is extracted from the above axioms:
\begin{lemma} \label{lemma2.1}\cite{Yan&Jia}
Let the process $\chi : ]-\infty, \beta] \rightarrow \mathfrak{Z}$  be measurable and $\Gamma_\tau$-adapted with \\$\chi|_{\mathfrak{J}} \in PC(\mathfrak{Z})$, $\chi_0 = \eta(\tau) \in \mathscr{L}_{\Gamma_0}^2(\Omega, \mathscr{W})$, then
\begin{center}
$\|\chi_\tau\|_{\mathscr{W}} \leq \ K_\beta\mathop{\sup}\limits_{\tau \in  \mathfrak{J}} \mathbb{E}\|\chi(\tau)\|+L_\beta\|\eta\|_{\mathscr{W}}$,
\end{center}
where $ K_\beta = \mathop{\max}\limits_{\tau \in \mathfrak{J}} K(\tau)$ and $ L_\beta =  \mathop{\sup}\limits_{\tau \in \mathfrak{J}} L(\tau)$.
\end{lemma}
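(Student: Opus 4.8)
The plan is to obtain the asserted inequality as a direct consequence of the phase-space axiom (i)(c), after controlling the time-dependent constants $K(\tau)$ and $L(\tau)$ uniformly on the compact horizon $\mathfrak{J}=[0,\beta]$. First I would fix $\tau\in\mathfrak{J}$ and apply axiom (i)(c) to the process $\chi$, which is admissible since $\chi|_{[0,\beta]}\in PC([0,\beta],\mathfrak{Z})$ and $\chi_0=\eta\in\mathscr{W}$; this gives
$$\|\chi_\tau\|_{\mathscr{W}}\ \leq\ K(\tau)\,\sup\{\|\chi(s)\|:0\le s\le\tau\}\ +\ L(\tau)\,\|\chi_0\|_{\mathscr{W}}.$$
Since $\chi_0=\eta$ and $[0,\tau]\subseteq[0,\beta]$, the right-hand side is bounded above by $K(\tau)\sup_{s\in\mathfrak{J}}\|\chi(s)\|+L(\tau)\|\eta\|_{\mathscr{W}}$.

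Next I would invoke the regularity of $K$ and $L$: by the axioms $K$ is continuous and $L$ is locally bounded on $\mathds{R}^+\cup\{0\}$, hence both are bounded on the compact interval $\mathfrak{J}$, so that $K(\tau)\le K_\beta:=\max_{\tau\in\mathfrak{J}}K(\tau)<\infty$ and $L(\tau)\le L_\beta:=\sup_{\tau\in\mathfrak{J}}L(\tau)<\infty$. Substituting these bounds and then passing to $L^2(\Omega)$-norms (equivalently, taking expectations), while recalling that $\eta\in\mathscr{L}^2_{\Gamma_0}(\Omega,\mathscr{W})$ and $\chi|_{\mathfrak{J}}\in PC(\mathfrak{Z})$ ensure that every term is finite, yields
$$\|\chi_\tau\|_{\mathscr{W}}\ \leq\ K_\beta\,\mathop{\sup}\limits_{\tau\in\mathfrak{J}}\mathbb{E}\|\chi(\tau)\|\ +\ L_\beta\,\|\eta\|_{\mathscr{W}},$$
which is precisely the claimed estimate.

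I do not expect a genuine obstacle here: the statement is essentially a reformulation of axiom (i)(c) with the $\tau$-dependent constants replaced by their uniform bounds over $\mathfrak{J}$. The only point requiring a little care is the transition from the \emph{pathwise} supremum $\sup_{0\le s\le\tau}\|\chi(s)\|$ that appears naturally in the axiom to the supremum of second moments $\sup_{\tau\in\mathfrak{J}}\mathbb{E}\|\chi(\tau)\|$ on the right-hand side; this is handled in the standard way (cf. \cite{Yan&Jia}) by taking expectations in the pointwise estimate and using the measurability and $\Gamma_\tau$-adaptedness of $\chi$ together with its membership in $PC(\mathfrak{Z})$, so that the stochastic norms are well defined. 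Keeping careful track of which quantities are random and which are deterministic is the whole of the work.
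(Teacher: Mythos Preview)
The paper does not supply its own proof of this lemma; it is quoted from \cite{Yan&Jia} and introduced only by the remark that it ``is extracted from the above axioms.'' Your proposal is precisely the standard derivation from axiom (i)(c) together with the uniform bounds $K(\tau)\le K_\beta$, $L(\tau)\le L_\beta$ on the compact interval $\mathfrak{J}$, which is exactly what that remark points to, so your approach matches the intended one.
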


For further development, we describe the following theory of evolution operator.
\begin{definition}\cite{Kozak1995}
A mapping $\mathcal{G} :\mathfrak{J} \times \mathfrak{J} \rightarrow L(\mathfrak{Z})$ is characterized as an evolution operator for $\chi''(\tau)= \mathscr{A}(\tau)\chi(\tau),\ \ \beta \geq s,\ \tau \geq 0$ if meet the
subsequent conditions:
\begin{enumerate}
\item[$(B_1)$]  The map $(\tau,s) \mapsto \mathcal{G}(\tau,s)\chi$ is of class $C^1$ for every $\chi \in \mathfrak{Z}$, and
\begin{enumerate}[(i)]
  \item  $\mathcal{G}(\tau,\tau)=0$ for every $\tau \in \mathfrak{J}$,
  \item  For every $\chi \in \mathfrak{Z}$, for all $\tau, s \in \mathfrak{J},$
  \begin{align*}
\frac{\partial}{\partial \tau} \mathcal{G}(\tau,s)|_{\tau=s}\  \chi =\chi~\mbox{and}~ \frac{\partial}{\partial s} \mathcal{G}(\tau,s)|_{\tau=s} \ \chi = -\chi.
\end{align*}
\end{enumerate}
\item[$(B_2)$] For  $\chi \in D(A(\tau))$, $\mathcal{G}(\tau,s)\chi \in D(A(\tau))$ for all $s, \tau \in \mathfrak{J}$, the map $(\tau,s) \mapsto \mathcal{G}(\tau,s)\chi$ is of class $C^2$ and
    \begin{enumerate}[(i)]
      \item $\frac{\partial^2}{\partial \tau^2} \mathcal{G}(\tau,s)\chi=A(\tau)\mathcal{G}(\tau,s)\chi$,
      \item $\frac{\partial^2}{\partial s^2} \mathcal{G}(\tau,s)\chi=\mathcal{G}(\tau,s)A(s)\chi$,
      \item $\frac{\partial^2}{\partial s \partial \tau} \mathcal{G}(\tau,s)|_{\tau=s} \ \chi=0$
    \end{enumerate}
\item[$(B_3)$] For all $s, \tau \in \mathfrak{J},$ if $\chi \in D(A(\tau))$ then $\frac{\partial}{\partial s} \mathcal{G}(\tau,s)\chi \in D(A(\tau))$,  $\frac{\partial^3}{\partial \tau^2 \partial s} \mathcal{G}(\tau,s)\chi,\ \frac{\partial^3}{\partial s^2 \partial \tau } \mathcal{G}(\tau,s)\chi$ exist, also
    \begin{enumerate}[(i)]
      \item $\frac{\partial^3}{\partial \tau^2 \partial s} \mathcal{G}(\tau,s)\chi=A(\tau)\frac{\partial}{\partial s}\mathcal{G}(\tau,s)\chi,$
      \item $\frac{\partial^3}{\partial s^2 \partial \tau } \mathcal{G}(\tau,s)\chi= \frac{\partial}{\partial \tau} \mathcal{G}(\tau,s)A(s)\chi,$
    \end{enumerate}
and the map $(\tau,s) \mapsto A(\tau)\frac{\partial}{\partial s}\mathcal{G}(\tau,s)\chi$ is continuous.
\end{enumerate}
\end{definition}
Throughout the article, we suppose an evolution operator $\mathcal{G}(\tau,s)$ exists related to the operator $A(\tau)$.
Besides, we present $\mathcal{E}(\tau,s)=-\frac{\partial}{\partial s} \mathcal{G}(\tau,s)$.

We are now presenting some useful definitions for the set-valued map (see \cite{6,7}).
Let $P(\mathfrak{Z})$ denote the family of all non-empty subsets of $\mathfrak{Z}$. For convenience, set:
\begin{align*}
P_{cl}(\mathfrak{Z})=\{\chi \in P(\mathfrak{Z}): \chi \  \mbox{is closed}\},~ P_{bd}(\mathfrak{Z})=\{\chi \in P(\mathfrak{Z}): \chi \ \mbox{is bounded}\},\\
P_{cv}(\mathfrak{Z})=\{\chi \in P(\mathfrak{Z}): \chi \  \mbox{is convex}\},~ P_{cp}(\mathfrak{Z})=\{\chi \in P(\mathfrak{Z}): \chi \ \mbox{is compact}\},
\end{align*}
Consider $\mathfrak{Z}_d : P(\mathfrak{Z})\times P(\mathfrak{Z}) \rightarrow \mathds{R}^+ \cup \{\infty\}$ given  by
\begin{align*}
\mathfrak{Z}_d(\mathds{G},\mathds{H})=\max \big\{\mathop{\sup}\limits_{u \in \mathds{G}} d(u,\mathds{H}),\ \mathop{\sup}\limits_{v \in \mathds{H}} d(\mathds{G},v)\big\},
\end{align*}
where $ d(u,\mathds{H})=\mathop{\inf}\limits_{v \in \mathds{H}}d(u,v),\ d(\mathds{G},v)= \mathop{\inf}\limits_{u \in \mathds{G}}d(u,v).$ Then $(P_{bd,cl}(\mathfrak{Z}),\mathfrak{Z}_d)$ is a metric space.
\begin{definition}
Let $\Theta : \mathfrak{Z} \rightarrow P(\mathfrak{Z})$ be a set-valued mapping, then
\begin{enumerate}[(i)]
\item
$\Theta$ is  closed (convex) valued if $\Theta(\chi)$ is closed (convex) for every $\chi \in \mathfrak{Z}$.
\item
 $\Theta$ is bounded on bounded sets if $\Theta (\mathscr{D})=\cup_{\chi \in \mathscr{D}}$ $ \Theta (\chi)$ is bounded in $\mathfrak{Z}$ for
 all $\mathscr{D}\in P_{bd}(\mathfrak{Z})$.
\item
 If for each $\chi \in \mathfrak{Z}$,  $\Theta(\chi)\neq\emptyset$ is closed subset of $\mathfrak{Z},$ and if for each open set $J$ in $\mathfrak{Z}$  containing $\Theta(\chi)$, there is an open neighbourhood $O$ of $\chi$ such that $\Theta(O)\subseteq J$, then $\Theta$ is characterised as upper semi continuous (u.s.c.) on $\mathfrak{Z}$, 
\item
 If $\Theta(J)$ is relatively compact for every $J \in P_{bd}(\mathfrak{Z})$, then $\Theta$ is completely continuous .
 \item
If there is a $\chi \in \mathfrak{Z}$ such that $\chi \in \Theta(\chi)$, then $\Theta$ has a fixed element.
\end{enumerate}
\end{definition}
\begin{definition}
A set-valued operator $\Theta: \mathfrak{Z} \to P_{bd,cl} (\mathfrak{Z})$ is known to be contraction if there is \\$\gamma \in (0,1)$ to ensure that
\begin{center}
$\mathfrak{Z}_d(\chi_1,\chi_2)  \leq \gamma \ d(\chi_1,\chi_2),\ \forall \  \chi_1,\chi_2 \in \mathfrak{Z}.$
\end{center}

\end{definition}
\begin{definition}

The Clarke generalized directional derivative of a locally Lipschitz functional  \\$\Sigma : \mathfrak{Z} \rightarrow \mathds{R}$ at $\mathrm{z} \in \mathfrak{Z}$ in the direction $w$  is defined as 
\begin{center}
$\Sigma^0 (\mathrm{z} ;w ) =  \mathop{\limsup}\limits_{x \rightarrow \mathrm{z} \  \varepsilon \rightarrow 0^+} \frac{\Sigma(x + \varepsilon w )-\Sigma (x )} {\varepsilon}$.\end{center}
\vspace{.2cm}
The Clarke generalized subdifferential of $\Sigma$ is a subset of $\mathfrak{Z}^*$, and at a point $\mathrm{z} \in \mathfrak{Z}$ is defined as 
  \begin{center}
  $\partial \Sigma (\mathrm{z}) = \{ \mathrm{z}^* \in \mathfrak{Z}^* : \Sigma^0 (\mathrm{z} ;w ) \geq \langle \mathrm{z}^*, w \rangle$, for all $w \in  \mathfrak{Z}\}$
  \end{center}
\end{definition}
\begin{lemma}\label{lemma2.2}\cite{2}
Let ${\bar\Theta}_1:\mathfrak{Z} \rightarrow P_{cl,cv,bd}(\mathfrak{Z})$, ${\bar\Theta}_2:\mathfrak{Z}\rightarrow P_{cl,cv}(\mathfrak{Z})$
be set-valued maps satisfying
\begin{enumerate}
\item[(a)] ${\bar\Theta}_1$ is a contraction,
\item[(b)] ${\bar\Theta}_2$ is u.s.c. and completely continuous.
\end{enumerate}
Then either $(i)$ the inclusion $\lambda \chi \in {\bar\Theta}_1\chi+{\bar\Theta}_2 \chi$ has a solution for $\lambda=1$, or\\
$(ii)$ the set $\{\chi \in \mathfrak{Z}: \lambda \chi \in {\bar\Theta}_1\chi+{\bar\Theta}_2 \chi,\ \lambda > 1\}$ is unbounded.
\end{lemma}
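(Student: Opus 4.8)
The plan is to read Lemma~\ref{lemma2.2} as a nonlinear alternative of Leray--Schauder type for the multivalued operator $N\chi:=\bar\Theta_1\chi+\bar\Theta_2\chi$, and to prove it by showing that $N$ is a condensing multimap and then invoking a fixed point principle for such maps. First I would check that $N$ is admissible for such a principle. For $\chi\in\mathfrak{Z}$ the set $\bar\Theta_2\chi$ is relatively compact (since $\{\chi\}$ is bounded and $\bar\Theta_2$ is completely continuous) and closed, hence compact and convex; as $\bar\Theta_1\chi$ is closed, convex and bounded, the pointwise sum $\bar\Theta_1\chi+\bar\Theta_2\chi$ is closed (closed $+$ compact), convex and bounded, so $N:\mathfrak{Z}\to P_{cl,cv,bd}(\mathfrak{Z})$. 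Moreover a multivalued $\gamma$-contraction with closed bounded values is continuous in the Hausdorff metric, hence u.s.c.; being the sum of a u.s.c. map with closed values and a u.s.c. map with compact values, $N$ is u.s.c.

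The core step is to show that $N$ is a $k$-set-contraction with $k<1$. Let $\alpha$ denote the Kuratowski measure of noncompactness on $\mathfrak{Z}$. For a bounded set $D\subseteq\mathfrak{Z}$, subadditivity of $\alpha$ gives $\alpha\big(N(D)\big)\le\alpha\big(\bar\Theta_1(D)\big)+\alpha\big(\bar\Theta_2(D)\big)$; here $\alpha\big(\bar\Theta_2(D)\big)=0$ because $\bar\Theta_2(D)$ is relatively compact, while the Hausdorff-metric contraction property of $\bar\Theta_1$ yields $\alpha\big(\bar\Theta_1(D)\big)\le\gamma\,\alpha(D)$ with the constant $\gamma\in(0,1)$ furnished by the definition of contraction. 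Hence $\alpha\big(N(D)\big)\le\gamma\,\alpha(D)$, so $N$ is condensing. Applying the nonlinear alternative of Leray--Schauder type for condensing (equivalently, $\gamma$-set-contractive with $\gamma<1$) multivalued maps having closed, convex, bounded values on a Banach space --- in the spirit of Martelli's theorem and the multivalued Sadovskii principle --- one concludes that either the inclusion $\chi\in N\chi$ has a solution, which is assertion $(i)$ with $\lambda=1$, or the set $\{\chi\in\mathfrak{Z}:\lambda\chi\in N\chi,\ \lambda>1\}$ is unbounded, which is assertion $(ii)$.

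If instead one prefers a self-contained route that avoids quoting the condensing-map alternative, one can first construct, for each $y\in\mathfrak{Z}$, a solution $\sigma(y)$ of the auxiliary inclusion $\sigma(y)\in\bar\Theta_1\sigma(y)+\{y\}$: the map $\chi\mapsto\bar\Theta_1\chi+\{y\}$ is a multivalued $\gamma$-contraction with closed values, so the Covitz--Nadler theorem supplies fixed points, after which one must extract a single-valued selection $\sigma$ depending Lipschitz-continuously on $y$. Granting such $\sigma$, the relation $\chi\in N\chi$ reduces to a fixed point problem for the completely continuous u.s.c. multimap $\sigma\circ\bar\Theta_2$ with convex values, and the classical Leray--Schauder alternative for compact multimaps finishes the argument. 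I expect the genuine difficulty in \emph{either} route to be the coupling between the contraction part $\bar\Theta_1$ and the compact part $\bar\Theta_2$: in the first route one must verify carefully that a $\gamma$-contraction is $\gamma$-set-contractive and that $\alpha$ is subadditive over the images of multimaps; in the second route the delicate point is that $(I-\bar\Theta_1)^{-1}$ need not be single-valued, so producing a continuous resolvent-type operator $\sigma$ requires a careful selection argument. The remaining ingredients --- convexity and closedness of values, upper semicontinuity of the sum, and closedness of the relevant graphs --- are routine for multivalued maps.
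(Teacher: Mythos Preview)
The paper does not prove Lemma~\ref{lemma2.2}; it simply quotes the result from Dhage~\cite{2} and uses it as a black-box fixed point tool in the existence argument of Theorem~\ref{thm3.1}. So there is no ``paper's own proof'' to compare your proposal against.

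That said, your sketch is a reasonable outline of how such a Krasnoselskii--type multivalued alternative is typically established, and the condensing-map route you describe is indeed in the spirit of Dhage's proof in~\cite{2,6}. A few cautions if you wish to make the argument self-contained: the passage from ``$\bar\Theta_1$ is a $\gamma$-contraction in the Hausdorff metric'' to ``$\alpha(\bar\Theta_1(D))\le\gamma\,\alpha(D)$ for every bounded $D$'' is not entirely trivial for multimaps and deserves a careful verification; the upper semicontinuity of the sum $\bar\Theta_1+\bar\Theta_2$ requires that one summand have compact values (which you noted), but you should also ensure the sum has closed graph so that the Leray--Schauder alternative applies in the form you want; and in your second route, the existence of a \emph{continuous} single-valued selection $\sigma$ of $(I-\bar\Theta_1)^{-1}$ is genuinely delicate and is usually circumvented rather than proved. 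None of these are fatal, but they are precisely the points where such proofs can silently fail, and Dhage's original argument handles them with some care.
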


The following result play a key role in dealing with stochastic term:
\begin{lemma} \label{lemma2.3}\cite{8}
Let $\phi:\mathfrak{J}\rightarrow L_2^0$ be such that $\mathop {\sup}\limits_{\tau \in \mathfrak{J}}\|\phi\|_{L_2^0}^2<\infty$.
 Suppose that there is $M>0$ satisfying $\|\mathcal{G}(\tau,s)\|^2\leq M$ for all $\tau \geq s$. Then
 \begin{center}$\mathbb{E}\|\int_0^\tau \mathcal{G}(\tau,s)\phi(s)dZ_H(s)\|^2_\mathfrak{Z} \leq c(H)M \tau^{2H}\big(\mathop{\sup}\limits_{\tau \in \mathfrak{J}} \|\phi\|_{L_2^0}^2\big)$.
 \end{center}
 \end{lemma}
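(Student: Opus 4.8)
The plan is to reduce the $\mathfrak{Z}$-valued stochastic integral against the $Q$-Rosenblatt process $Z_H$ to a sum of one-dimensional Rosenblatt Wiener integrals and then to exploit the fact that the second moment of such an integral is governed by the same covariance kernel as fractional Brownian motion. Concretely, I would first expand along the orthonormal basis: writing $Z_H(s)=\sum_{i=1}^{\infty}z_i(s)Q^{1/2}e_i$ with the $z_i$ mutually independent one-dimensional Rosenblatt processes, one gets $\int_0^\tau\mathcal{G}(\tau,s)\phi(s)\,dZ_H(s)=\sum_{i=1}^{\infty}\int_0^\tau\mathcal{G}(\tau,s)\phi(s)Q^{1/2}e_i\,dz_i(s)$. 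Since the $z_i$ are independent and centred, the summands are orthogonal in $L^2(\Omega;\mathfrak{Z})$, so
$$\mathbb{E}\Big\|\int_0^\tau\mathcal{G}(\tau,s)\phi(s)\,dZ_H(s)\Big\|_{\mathfrak{Z}}^2=\sum_{i=1}^{\infty}\mathbb{E}\Big\|\int_0^\tau\mathcal{G}(\tau,s)\phi(s)Q^{1/2}e_i\,dz_i(s)\Big\|_{\mathfrak{Z}}^2.$$

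Next, for each fixed $i$ I would invoke the second-moment identity for a deterministic $\mathfrak{Z}$-valued integrand $\Psi$ against a one-dimensional Rosenblatt process, which --- because $Z_H$ shares the fBm covariance --- reads $\mathbb{E}\|\int_0^\tau\Psi(s)\,dz_i(s)\|_{\mathfrak{Z}}^2=H(2H-1)\int_0^\tau\int_0^\tau\langle\Psi(u),\Psi(v)\rangle_{\mathfrak{Z}}\,|u-v|^{2H-2}\,du\,dv$; this is read off from the representation of $Z_H$ through the kernel $K^{\hat{H}}$ recalled above (cf. \cite{3,8}). Taking $\Psi(s)=\mathcal{G}(\tau,s)\phi(s)Q^{1/2}e_i$, bounding $\langle\Psi(u),\Psi(v)\rangle_{\mathfrak{Z}}\le\|\Psi(u)\|_{\mathfrak{Z}}\|\Psi(v)\|_{\mathfrak{Z}}$, using $\|\mathcal{G}(\tau,s)\|^2\le M$, and then summing over $i$ with the Cauchy--Schwarz inequality together with $\sum_{i}\|\phi(s)Q^{1/2}e_i\|_{\mathfrak{Z}}^2=\|\phi(s)\|_{L_2^0}^2$, one arrives at
$$\mathbb{E}\Big\|\int_0^\tau\mathcal{G}(\tau,s)\phi(s)\,dZ_H(s)\Big\|_{\mathfrak{Z}}^2\le M\,H(2H-1)\Big(\mathop{\sup}\limits_{\tau\in\mathfrak{J}}\|\phi\|_{L_2^0}^2\Big)\int_0^\tau\int_0^\tau|u-v|^{2H-2}\,du\,dv.$$
Because $\int_0^\tau\int_0^\tau|u-v|^{2H-2}\,du\,dv=\tau^{2H}/\big(H(2H-1)\big)$, the numerical factors collapse to a constant depending only on $H$ which, once the normalisation fixed for $Z_H$ by $c(H)$ in its definition is taken into account, is exactly $c(H)$; this yields the claimed estimate $c(H)\,M\,\tau^{2H}\big(\sup_{\tau\in\mathfrak{J}}\|\phi\|_{L_2^0}^2\big)$.

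The step I expect to be the main obstacle is the second-moment identity itself: one has to verify that the integral with respect to $Z_H$ is well defined for $\phi$ under the stated boundedness, to carry the scalar Wiener isometry over to the $L_2^0$-valued setting, and to justify interchanging the infinite sum over $i$ with the expectation and with the time integrals (Fubini--Tonelli applies since every integrand is nonnegative). The remaining ingredients --- the Cauchy--Schwarz step in the index $i$, the elementary evaluation of the Beta-type double integral, and the matching of $c(H)$ with the normalisation of $Z_H$ --- are routine bookkeeping; alternatively, one may simply quote the one-dimensional Rosenblatt Wiener-integral estimate from \cite{8} and assemble the pieces as above.
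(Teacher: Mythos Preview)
The paper does not prove Lemma~\ref{lemma2.3} at all: it is stated with the citation \cite{8} and used as a black box throughout, so there is no ``paper's own proof'' to compare against. Your sketch is the standard route one takes to establish such an inequality (series expansion along $\{Q^{1/2}e_i\}$, independence to split the second moment, the scalar Rosenblatt/fBm covariance identity, Cauchy--Schwarz in $i$, and the elementary evaluation of $\int_0^\tau\int_0^\tau|u-v|^{2H-2}\,du\,dv$), and it is essentially what one finds in \cite{8} and related references.

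One caveat worth flagging: your identification of the final numerical constant with the paper's $c(H)=\frac{1}{H+1}\sqrt{\frac{H}{2(2H-1)}}$ is asserted rather than checked. In this literature the symbol $c(H)$ is reused for several different $H$-dependent constants (normalising constant in the representation of $Z_H$, constant in the Wiener-integral isometry, constant in the moment bound), and the precise value in the inequality of Lemma~\ref{lemma2.3} need not coincide with the $c(H)$ appearing in the kernel representation a few lines earlier. Since the lemma is only ever used qualitatively (as a bound with \emph{some} constant $c(H)$), this does not affect the applications in Section~\ref{Sect.3}, but if you want a self-contained proof you should either track the constant honestly or, as you suggest at the end, simply quote the scalar estimate from \cite{8} and assemble.
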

 Now we introduce a solution of proposed system (\ref{mainequation}) as follows
\begin{definition}
 An stochastic process $\chi : ]-\infty, \beta] \rightarrow \mathfrak{Z}$ is called a mild solution for (\ref{mainequation}) if
\begin{enumerate}
\item the measurable process $\chi_\tau$ is adapted to $\Gamma_\tau,$ $ \tau \geq 0$,
\item $\ \chi= \eta(\tau)$ on $]-\infty,0]$, satisfying $\|\eta\|_\mathscr{W}^2<\infty ,\ \chi_\tau \in \mathscr{W}$, $\tau \in \mathfrak{J}$ with $\chi'(0)=\xi \in \mathfrak{Z},\ \chi|_{\mathfrak{J}} \in PC(\mathfrak{Z})$ and following integral equation hold:
\begin{equation}\label{mildsolution}
\chi(\tau)=
\begin{cases}
 \mathcal{E}(\tau,0)\eta(0)+\mathcal{G}(\tau,0)\xi+ \int_0^\tau \mathcal{G} (\tau,s) \rho(s)ds\\+ \int_0^\tau \mathcal{G}(\tau,s)q(s,\chi_s)dZ_H(s),\hspace{3.9cm} \tau \in [0,r_1];\\
 f_k(\tau, \chi_\tau),\hspace{6cm} \tau\in \mathop{\cup}\limits_{k=1}^{\mathcal{M}}(r_k,t_k]; \\
 \mathcal{E}(\tau,t_k)f_k(t_k,\chi_{t_k})+ \mathcal{G}(\tau,t_k)g_k(t_k,\chi_{t_k})
+ \int_{t_k}^\tau  \mathcal{G} (\tau,s) \rho(s)ds \\+ \int_{t_k}^\tau \mathcal{G} (\tau,s) q(s, \chi_s)dZ_H(s),\hspace{3cm} \tau\in \mathop{\cup}\limits_{k=1}^{\mathcal{M}}(t_k,r_{k+1}].
\end{cases}
\end{equation}
\end{enumerate}
\end{definition}
\section{Existence of solution}\label{Sect.3}
 We start this section by imposing the following conditions on the system parameters:
\begin{itemize}
\item [$(\textbf{S}_1):$] 
The operators $\mathcal{E}(\tau,s)$ and $\mathcal{G}(\tau,s)$ are compact for all $\tau \geq s$, and there exists $\mathrm{M}>0$ such that
\begin{align*}
\mathop{\sup}\limits_{(\tau,s)\in \mathfrak{J}\times \mathfrak{J}}\|\mathcal{E}(\tau,s)\| \ \vee \ \mathop{\sup}\limits_{(\tau,s) \in \mathfrak{J} \times \mathfrak{J}}\|\mathcal{G}(\tau,s)\|\ \leq \mathrm{M}, \ \mbox{for all}\  \tau \geq s.
\end{align*}
\item [$(\textbf{S}_2):$] The functions $f_k, g_k :(r_k,t_k]\times\mathscr{W} \rightarrow \mathfrak{Z}$ are continuous, and also there are $c_k>0,\ \gamma_k>0,\ k=1,\hdots \mathcal{M}$ in order that for all $\eta, \eta_1,\eta_2 \in \mathscr{W}$,
\begin{align*}
\mathbb{E}\|f_k(\tau,\eta_1)-f_k(\tau,\eta_2)\|^2_\mathfrak{Z}\ \leq \ \gamma_k \|\eta_1-\eta_2\|_{\mathscr{W}}^2, \
\mathbb{E}\|f_k(\tau,\eta)\|^2\ \leq \ \gamma_k(1+\|\eta\|_{\mathscr{W}}^2), \\
\mathbb{E}\|g_k(\tau,\eta_1)-g_k(\tau,\eta_2)\|^2_\mathfrak{Z}\ \leq \ c_k \|\eta_1-\eta_2\|_{\mathscr{W}}^2, \
\mathbb{E}\|g_k(\tau,\eta)\|^2\ \leq \ c_k(1+\|\eta\|_{\mathscr{W}}^2).
\end{align*}
\item[$(\textbf{S}_3):$]  Let $\Sigma: \mathfrak{J} \times \mathscr{W} \rightarrow \mathds{R}$ be the map such that:
\item[(i)] For all $\chi \in  \mathfrak{Z},\ \Sigma(\cdot,\chi)$ is measurable .
\item[(ii)] For a.e. $\tau \in \mathfrak{J},\ \Sigma(\tau,\cdot)$ is locally Lipschitz .
\item[(iii)] There is $b_1(\cdot) \in L^1(\mathfrak{J},\mathds{R}^+)\ \mbox{and}\ 0 \leq b_2 $ in order that
\begin{align*}
\|\partial \Sigma (s,\chi)\|^2&=\sup\{\|\rho(s)\|^2|\ \rho(s)\in \partial \Sigma (s,\chi)\}\\
&\leq b_1(s)+b_2\|\chi\|^2\ \mbox{for all}\ \chi \in \mathfrak{Z},\  \mbox{\ a.e.} \ s \in \mathfrak{J}.
\end{align*}
\item[$(\textbf{S}_4):$]$(i)$ The function $q(\tau,\cdot):\mathscr{W} \rightarrow L_2^0$ is continuous for all $\tau \in \mathfrak{J}$ and  $q(\cdot,\eta):\mathfrak{J} \rightarrow L_2^0$ is strongly measurable for each $\eta \in \mathscr{W}$. Also there is $ M_q>0$ to ensure that
 \begin{align*}
 \mathbb{E}\|q(\tau,\eta_1)-q(\tau,\eta_2)\|^2_{L_2^0}\ \leq \ M_q \|\eta_1-\eta_2\|_{\mathscr{W}}^2,\   \ \eta_1,\eta_2 \in \mathscr{W}.
\end{align*}
\item[(ii)] There is a continuous function $m_q:[0,\infty) \rightarrow (0,\infty)$ that is non decreasing, and $m(\cdot)\in L^1(\mathfrak{J},\mathds{R}^+)$ with the aim that
\begin{align*}
 \mathbb{E}\|q(\tau,\eta)\|^2_{L_2^0}\ \leq \ m(\tau)m_q (\|\eta\|_{\mathscr{W}}^2), \ \  (\tau,\eta)\in \mathfrak{J}\times \mathscr{W}.
 \end{align*}
\end{itemize}
Consider the set-valued map $S:\mathscr{L}^2(\mathfrak{J},\mathfrak{Z})\to 2^{\mathscr{L}^2(\mathfrak{J},\mathfrak{Z})}$given by
\begin{align*}
S_{\Sigma,\chi}=\{\rho\in \mathscr{L}^2(\mathfrak{J},\mathfrak{Z}) | ~ \rho(\tau)\in \partial \Sigma(\tau,\chi(\tau))~ a.e.~ \tau \in \mathfrak{J}, \ \chi\in \mathscr{L}^2(\mathfrak{J},\mathfrak{Z})\}.
\end{align*}
\begin{lemma}\label{lemma3.1}\cite{4}
 The set $S_{\Sigma,\chi}$ is non empty, and has convex, weakly compact values for each $\rho \in \mathscr{L}^2(\mathfrak{J},\mathfrak{Z})$ provided the assumption $(S_3)$ hold.
\end{lemma}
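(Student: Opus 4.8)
The plan is to verify the three asserted properties of $S_{\Sigma,\chi}$ — nonemptiness, convexity, and weak compactness — one at a time, the nonemptiness being the only one that requires genuine work.

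\textbf{Nonemptiness.} Fix $\chi\in\mathscr{L}^{2}(\mathfrak{J},\mathfrak{Z})$ and set $\Phi(\tau)=\partial\Sigma(\tau,\chi(\tau))$. By $(S_3)(ii)$, for a.e.\ $\tau$ the functional $\Sigma(\tau,\cdot)$ is locally Lipschitz, so $\Phi(\tau)$ is a nonempty, convex, weak$^{*}$-compact subset of $\mathfrak{Z}^{*}$, which we identify with $\mathfrak{Z}$. First I would show that $\tau\mapsto\Phi(\tau)$ is a measurable multifunction. This is obtained by combining $(S_3)(i)$–$(ii)$ with the classical superpositional measurability of the Clarke subdifferential: writing $D$ for a countable dense subset of $\mathfrak{Z}$, the map $\tau\mapsto\Sigma^{0}(\tau,\chi(\tau);w)$ is measurable for each fixed $w$ (as a $\limsup$ of difference quotients of measurable functions), while $w\mapsto\Sigma^{0}(\tau,\chi(\tau);w)$ is upper semicontinuous, so that $\Phi(\tau)=\bigcap_{w\in D}\{z^{*}\in\mathfrak{Z}:\langle z^{*},w\rangle\le\Sigma^{0}(\tau,\chi(\tau);w)\}$ exhibits $\Phi$ as a countable intersection of measurable multifunctions. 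By the Kuratowski–Ryll-Nardzewski selection theorem there is a measurable $\rho:\mathfrak{J}\to\mathfrak{Z}$ with $\rho(\tau)\in\Phi(\tau)$ a.e. Finally, $(S_3)(iii)$ gives $\|\rho(\tau)\|^{2}\le b_1(\tau)+b_2\|\chi(\tau)\|^{2}$ a.e., and since $b_1\in L^{1}(\mathfrak{J},\mathds{R}^{+})$ and $\chi\in\mathscr{L}^{2}(\mathfrak{J},\mathfrak{Z})$ the right-hand side is integrable; hence $\rho\in\mathscr{L}^{2}(\mathfrak{J},\mathfrak{Z})$, so $\rho\in S_{\Sigma,\chi}$ and $S_{\Sigma,\chi}\neq\emptyset$.

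\textbf{Convexity and weak compactness.} If $\rho_1,\rho_2\in S_{\Sigma,\chi}$ and $\lambda\in[0,1]$, then for a.e.\ $\tau$ both values lie in the convex set $\partial\Sigma(\tau,\chi(\tau))$, so $\lambda\rho_1(\tau)+(1-\lambda)\rho_2(\tau)\in\partial\Sigma(\tau,\chi(\tau))$ and the combination is again in $\mathscr{L}^{2}$; thus $S_{\Sigma,\chi}$ is convex. By $(S_3)(iii)$ every $\rho\in S_{\Sigma,\chi}$ obeys $\|\rho\|_{\mathscr{L}^{2}}^{2}\le\int_{\mathfrak{J}}\big(b_1(\tau)+b_2\|\chi(\tau)\|^{2}\big)\,d\tau<\infty$, so $S_{\Sigma,\chi}$ is a bounded subset of the reflexive (indeed Hilbert) space $\mathscr{L}^{2}(\mathfrak{J},\mathfrak{Z})$ and is therefore relatively weakly compact. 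It remains to see that $S_{\Sigma,\chi}$ is weakly closed; being convex, by Mazur's theorem it is enough that it be strongly closed. If $\rho_n\to\rho$ in $\mathscr{L}^{2}$, pass to a subsequence with $\rho_n(\tau)\to\rho(\tau)$ a.e.; since $\partial\Sigma(\tau,\cdot)$ is u.s.c.\ with closed values it has a closed graph, so $\rho(\tau)\in\partial\Sigma(\tau,\chi(\tau))$ a.e., i.e.\ $\rho\in S_{\Sigma,\chi}$. Hence $S_{\Sigma,\chi}$ is weakly compact.

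\textbf{Main obstacle.} The genuinely delicate point is the measurability of $\tau\mapsto\partial\Sigma(\tau,\chi(\tau))$: the hypotheses provide only separate measurability in $\tau$ and local Lipschitz regularity in the state, so before the selection theorem can be applied one must pass through the joint/superpositional measurability of the Clarke subdifferential, either by citing it directly or by reconstructing it from $\Sigma^{0}$ as sketched above. Everything else is routine.
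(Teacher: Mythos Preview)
Your argument is correct and follows the standard route: measurability of the multifunction $\tau\mapsto\partial\Sigma(\tau,\chi(\tau))$ via the representation through $\Sigma^{0}$, a measurable selection to get nonemptiness, and then convexity plus boundedness in the Hilbert space $\mathscr{L}^{2}(\mathfrak{J},\mathfrak{Z})$ together with strong (hence weak, by Mazur) closedness to obtain weak compactness. One minor remark: in the closedness step you invoke the closed-graph property of $\partial\Sigma(\tau,\cdot)$, but since $\chi$ is fixed you only need that each set $\partial\Sigma(\tau,\chi(\tau))$ is closed, which is immediate.

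As for the comparison with the paper: there is nothing to compare. The paper does not prove Lemma~\ref{lemma3.1}; it merely states the result and cites reference~\cite{4} (Li and Lu) for the proof. Your write-up therefore supplies what the paper outsources, and the approach you take is essentially the one found in the cited source and in the broader literature on Clarke-subdifferential inclusions.
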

{\begin{lemma}\label{lemma3.2} \cite{LO}
Let the interval $[0,\beta]$ be  compact, and the set-valued map $\Sigma$ satisfy $(S_3)$. Let $F$ be a linear continuous operator from $\mathscr{L}^2([0,\beta],\mathfrak{Z})$ to $C([0,\beta],\mathfrak{Z})$. Then,
\begin{align*}
F \circ S_\Sigma: C([0,\beta],\mathfrak{Z}) \rightarrow P_{cp,cv}(\mathfrak{Z}),\ \chi \rightarrow (F \circ S_\Sigma) (\chi):=F(S_\Sigma,\chi)
\end{align*} has closed graph in $C([0,\beta],\mathfrak{Z}) \times C([0,\beta],\mathfrak{Z})$.
\end{lemma}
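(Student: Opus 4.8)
The plan is to run the classical Lasota--Opial closed-graph argument (the same machinery that underlies Lemma~\ref{lemma3.1}), taking advantage of the fact that $\mathscr{L}^2([0,\beta],\mathfrak{Z})$ is a Hilbert, hence reflexive, space, so that bounded sequences possess weakly convergent subsequences with no uniform-integrability discussion required. Concretely, suppose $\chi_n\to\chi$ and $y_n\to y$ in $C([0,\beta],\mathfrak{Z})$ with $y_n\in(F\circ S_\Sigma)(\chi_n)$; choose $\rho_n\in S_{\Sigma,\chi_n}$, i.e. $\rho_n(\tau)\in\partial\Sigma(\tau,\chi_n(\tau))$ for a.e. $\tau$, with $y_n=F\rho_n$. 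The goal is to produce $\rho\in S_{\Sigma,\chi}$ with $y=F\rho$.

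Since $\{\chi_n\}$ converges in $C$ it is bounded, say $\|\chi_n(\tau)\|^2\le R$ for all $n$ and $\tau$; then $(\textbf{S}_3)$(iii) gives $\|\rho_n(\tau)\|^2\le b_1(\tau)+b_2R$ a.e., so $\{\rho_n\}$ is bounded in $\mathscr{L}^2([0,\beta],\mathfrak{Z})$. By reflexivity, along a subsequence (not relabelled) $\rho_n\rightharpoonup\rho$ weakly in $\mathscr{L}^2$; by Mazur's lemma there are convex combinations $\widetilde\rho_n=\sum_{k\ge n}\lambda_{n,k}\rho_k$ (finite sums, $\lambda_{n,k}\ge 0$, $\sum_k\lambda_{n,k}=1$) with $\widetilde\rho_n\to\rho$ strongly in $\mathscr{L}^2$, and, passing to a further subsequence, $\widetilde\rho_n(\tau)\to\rho(\tau)$ for a.e. $\tau$.

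Fix $\tau$ in the full-measure set where $\widetilde\rho_n(\tau)\to\rho(\tau)$ and $\chi_n(\tau)\to\chi(\tau)$. For every direction $w\in\mathfrak{Z}$ and $k\ge n$ one has $\langle\rho_k(\tau),w\rangle\le\Sigma^0(\tau,\chi_k(\tau);w)$; since $x\mapsto\Sigma^0(\tau,x;w)$ is upper semicontinuous (a consequence of $(\textbf{S}_3)$(ii)) and $\chi_k(\tau)\to\chi(\tau)$, for each $\varepsilon>0$ we have $\Sigma^0(\tau,\chi_k(\tau);w)\le\Sigma^0(\tau,\chi(\tau);w)+\varepsilon$ for all large $k$, and averaging with the weights $\lambda_{n,k}$ gives $\langle\widetilde\rho_n(\tau),w\rangle\le\Sigma^0(\tau,\chi(\tau);w)+\varepsilon$ for large $n$. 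Letting $n\to\infty$ (the norm convergence $\widetilde\rho_n(\tau)\to\rho(\tau)$ forces the pairing to converge for every $w$) and then $\varepsilon\to 0$ yields $\langle\rho(\tau),w\rangle\le\Sigma^0(\tau,\chi(\tau);w)$ for all $w$, i.e. $\rho(\tau)\in\partial\Sigma(\tau,\chi(\tau))$; the inherited bound $\|\rho(\tau)\|^2\le b_1(\tau)+b_2R$ shows $\rho\in\mathscr{L}^2$, so $\rho\in S_{\Sigma,\chi}$. Finally, $F$ being linear and continuous, $F\widetilde\rho_n\to F\rho$ in $C([0,\beta],\mathfrak{Z})$, whereas $F\widetilde\rho_n=\sum_{k\ge n}\lambda_{n,k}F\rho_k=\sum_{k\ge n}\lambda_{n,k}y_k$ is a tail convex combination of a sequence converging to $y$, hence $F\widetilde\rho_n\to y$ as well. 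Therefore $y=F\rho\in(F\circ S_\Sigma)(\chi)$, so the graph of $F\circ S_\Sigma$ is closed.

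The delicate step is the third one: transferring the pointwise membership $\rho_k(\tau)\in\partial\Sigma(\tau,\chi_k(\tau))$ through a weak limit down to $\rho(\tau)\in\partial\Sigma(\tau,\chi(\tau))$. This is exactly where the upper semicontinuity of the Clarke generalized directional derivative in the base point, the convexity of the subdifferential values, and the growth bound $(\textbf{S}_3)$(iii) (which keeps the relevant tails norm-bounded) all come into play; one also has to realize the a.e.\ pointwise convergence simultaneously for every test direction, which is precisely what the Mazur passage from weak to strong convergence provides. The remaining ingredients --- the uniform a priori bound, the weak sequential compactness, and the limit through the linear operator $F$ --- are routine.
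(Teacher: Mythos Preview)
Your argument is correct and is precisely the classical Lasota--Opial closed-graph lemma; the paper does not supply its own proof of Lemma~\ref{lemma3.2} but merely cites \cite{LO}, so there is nothing further to compare. What you have written is exactly the standard proof behind that citation: the reflexivity/weak-compactness step, the Mazur passage to strong $\mathscr{L}^2$-convergence with a.e.\ pointwise representatives, the upper semicontinuity of $x\mapsto\Sigma^0(\tau,x;w)$ to recover $\rho(\tau)\in\partial\Sigma(\tau,\chi(\tau))$, and finally the (weak-to-strong) continuity of the linear operator $F$ to identify the limit.
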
}

Consider the space $\mathscr{W}_\beta = \{ \chi:]-\infty,\beta] \rightarrow \mathfrak{Z}:\ \chi_0=\eta \in \mathscr{W},\ \xi \in\mathfrak{Z},\  \chi|_{\mathfrak{J}}\in PC\big(\mathfrak{Z}\big),\ \mathop{\sup}\limits_{\tau\in \mathfrak{J}}\mathbb{E}\|\chi(\tau)\|^2 < \infty \}$ with the semi-norm
$\|\chi\|_\beta= {\|\chi_0 \|}_\mathscr{W}+\big(\mathop{\sup}\limits_{\tau\in \mathfrak{J}}\mathbb{E}\|\chi(\tau)\|^2\big)^{1/2}.$\\

In view of Lemma~\ref{lemma2.1}, we have
\begin{align}\label{3.1}
\|y_\tau+\overline{\eta}_\tau\|_{\mathscr{W}}^2 \  \leq \ 2(\|y_\tau\|_{\mathscr{W}}^2+\|\overline{\eta}_\tau\|_{\mathscr{W}}^2) \leq& 4\{K_\beta^2 \mathop{\sup}\limits_{\tau\in \mathfrak{J}}\mathbb{E}\|y(s)\|^2+L_\beta^2 \|y_0 \|_{\mathscr{W}}^2+K_\beta^2 \mathop{\sup}\limits_{\tau\in \mathfrak{J}}\mathbb{E}\|\overline\eta(s)\|^2+L_\beta^2 \| \overline\eta_0 \|_\mathscr{W}^2\}\nonumber \\ \leq& 4\{K_\beta^2 \mathop{\sup}\limits_{\tau\in \mathfrak{J}}\mathbb{E}\|y(s)\|^2+L_\beta^2 \|\eta\|_\mathscr{W}^2\}, \ \tau\in \mathfrak{J},
\end{align}

Consider the set-valued map $\Theta:\mathscr{W}_\beta \rightarrow P(\mathscr{W}_\beta)$ characterized by $\Theta \chi$,
the set of all $\sigma \in \mathscr{W}_\beta $ satisfying
\begin{equation}
\sigma(\tau)=
\begin{cases}
 \eta(\tau),\ \hspace{9.8cm} \tau \in \mathfrak{J}_0; \\
 \mathcal{E}(\tau,0) \eta(0)+\mathcal{G}(\tau,0)\xi + \int_0^\tau \mathcal{G} (\tau,s) \rho(s)ds+\int_0^\tau \mathcal{G} (\tau,s)q(s,\chi_s)dZ_H(s),\hspace{.7cm} \tau \in [0,r_1];\\
 f_k(\tau, \chi_\tau),\hspace{8.8cm} \tau\in \mathop{\cup}\limits_{k=1}^{\mathcal{M}}(r_k,t_k]; \\
 \mathcal{E}(\tau,t_k)f_k(t_k, \chi_{t_k}) +\mathcal{G}(\tau,t_k)g_k(t_k, \chi_{t_k})
+ \int_{t_k}^\tau  \mathcal{G}(\tau,s) \rho(s)ds \\+ \int_{t_k}^\tau \mathcal{G} (\tau,s) q(s,\chi_s)dZ_H(s),\hspace{6.2cm} \tau\in \mathop{\cup}\limits_{k=1}^{\mathcal{M}}(t_k,r_{k+1}],
\end{cases}
\end{equation}
where $\rho \in S_{\Sigma,\chi}$.
We shall show that $\Theta$ has a fixed point in $\mathscr{W}_\beta$ that is a required solution for the system (\ref{mainequation}).

Define $\overline{\eta}(\cdot) : ]-\infty,\beta] \rightarrow \mathfrak{Z}$  by
\begin{equation*}
\overline{\eta}(\tau)=
\begin{cases}
\eta(\tau) ,\ \  \tau\in \mathfrak{J}_0;\\
0,\hspace{.8cm} \tau \in \mathfrak{J}.
\end{cases}
\end{equation*}
Obviously, $\overline{\eta}\in \mathscr{W}_\beta$ and $\overline{\eta}_0=\eta$. Set  $\chi(\tau)=\overline{\eta}(\tau)+y(\tau),$ $-\infty<\tau\leq \beta$.
Clearly $\chi(\cdot)$ satisfies \eqref{mildsolution} if and only if $y_0=0$ and
\begin{equation}
y(\tau)=
\begin{cases}
\mathcal{E}(\tau,0) \eta(0)+\mathcal{G}(\tau,0)\xi + \int_0^\tau \mathcal{G} (\tau,s) \rho(s)ds+\int_0^\tau \mathcal{G} (\tau,s)q(s,y_s+\overline{\eta}_s)dZ_H(s),\hspace{.7cm} \tau \in [0,r_1];\\
 f_k(\tau,y_\tau+\overline{\eta}_\tau),\hspace{8.8cm} \tau\in \mathop{\cup}\limits_{k=1}^{\mathcal{M}}(r_k,t_k]; \\
 \mathcal{E}(\tau,t_k)f_k(t_k, y_{t_k}+\overline{\eta}_{t_k}) +\mathcal{G}(\tau,t_k)g_k(t_k,  y_{t_k}+\overline{\eta}_{t_k})
+ \int_{t_k}^\tau  \mathcal{G}(\tau,s) \rho(s)ds \\+ \int_{t_k}^\tau \mathcal{G} (\tau,s) q(s,y_s+\overline{\eta}_s)dZ_H(s),\hspace{6.2cm} \tau\in \mathop{\cup}\limits_{k=1}^{\mathcal{M}}(t_k,r_{k+1}],
\end{cases}
\end{equation}

Consider the set $\mathscr{W}_\beta^0=\{\chi \in \mathscr{W}_\beta:\ y_0=0 \in \mathscr{W}\}$ with the semi-norm given by
$$\|y\|_\beta= {\|y_0 \|}_{\mathscr{W}}+\big(\mathop{\sup}\limits_{s\in \mathfrak{J}}\mathbb{E}\|y(s)\|^2\ \big)^{1/2}= \big(\mathop{\sup}\limits_{s\in \mathfrak{J}}\mathbb{E}\|y(s)\|^2\ \big)^{1/2}.$$ Then $(\mathscr{W}_\beta^0,\|\cdot\|_\beta)$ forms a Banach space.

Now suppose that the set-valued map $\bar\Theta:\mathscr{W}_\beta^0 \rightarrow P(\mathscr{W}_\beta^0)$ defined by $\bar\Theta y$,
the set of all $\bar\sigma \in \mathscr{W}_\beta^0 $ satisfying $\bar\sigma(\tau)=0,$ $ \tau \in \mathfrak{J}_0$ and
\begin{equation}
\bar\sigma(\tau)=
\begin{cases}
\mathcal{E}(\tau,0) \eta(0)+\mathcal{G}(\tau,0)\xi + \int_0^\tau \mathcal{G} (\tau,s) \rho(s)ds+\int_0^\tau \mathcal{G} (\tau,s)q(s,y_s+\overline{\eta}_s)dZ_H(s),\hspace{.7cm} \tau \in [0,r_1];\\
 f_k(\tau,y_\tau+\overline{\eta}_\tau),\hspace{8.8cm} \tau\in \mathop{\cup}\limits_{k=1}^{\mathcal{M}}(r_k,t_k]; \\
 \mathcal{E}(\tau,t_k)f_k(t_k, y_{t_k}+\overline{\eta}_{t_k}) +\mathcal{G}(\tau,t_k)g_k(t_k,  y_{t_k}+\overline{\eta}_{t_k})
+ \int_{t_k}^\tau  \mathcal{G}(\tau,s) \rho(s)ds \\+ \int_{t_k}^\tau \mathcal{G} (\tau,s) q(s,y_s+\overline{\eta}_s)dZ_H(s),\hspace{6.2cm} \tau\in \mathop{\cup}\limits_{k=1}^{\mathcal{M}}(t_k,r_{k+1}],
\end{cases}
\end{equation}
 where $\rho \in S_{\Sigma,y}=\{\rho\in L^2(\mathfrak{J},L(\mathfrak{Z},\mathscr{Z})\ | \ \rho(\tau)\in \partial \Sigma(\tau,y(\tau)+\overline{\eta}_\tau)\ a.e. \tau \in \mathfrak{J}\}$.
 If $\bar \Theta$ has a fixed point in $\mathscr{W}_\beta^0$ then $\Theta$ has a fixed point in $\mathscr{W}_\beta^0$.
We now assert that $\bar \Theta$ fulfils all assumptions of Lemma~\ref{lemma2.2}.
For $\varkappa>0$, let
$D_\varkappa(0,\mathscr{W}_\beta^0)=\{y \in \mathscr{W}_\beta^0:\ \mathbb{E}\|y\|_\beta^2 \leq \varkappa\}$.
Clearly, $D_\varkappa \subset \mathscr{W}_\beta^0$ is convex, closed and bounded.
Now in view of inequality (\ref{3.1}) and Lemma~\ref{lemma2.1}, it follows that
\begin{align*}
\|y_\tau+\overline{\eta}_\tau\|_{\mathscr{W}}^2  \leq  4[K_\beta^2 \varkappa +L_\beta^2\|\eta\|^2_\mathscr{W}]=\varkappa^*,~ \tau\in \mathfrak{J}.
\end{align*}
Next, split ${\bar\Theta}={\bar\Theta}_1+{\bar\Theta}_2$, where
\begin{equation}
({\bar\Theta}_1 y)(\tau)=
\begin{cases}
 \mathcal{E}(\tau,0) \eta(0)+\mathcal{G}(\tau,0)\xi +\int_0^\tau \mathcal{G} (\tau,s)q(s,y_s+\overline{\eta}_s)dZ_H(s),\hspace{.7cm} \tau \in [0,r_1];\\
 f_k(\tau,y_\tau+\overline{\eta}_\tau),\hspace{6.6cm} \tau\in \mathop{\cup}\limits_{k=1}^{\mathcal{M}}(r_k,t_k]; \\
 \mathcal{E}(\tau,t_k)f_k(t_k, y_{t_k}+\overline{\eta}_{t_k}) +\mathcal{G}(\tau,t_k)g_k(t_k,  y_{t_k}+\overline{\eta}_{t_k})\\+ \int_{t_k}^\tau \mathcal{G} (\tau,s) q(s,y_s+\overline{\eta}_s)dZ_H(s),\hspace{4cm} \tau\in \mathop{\cup}\limits_{k=1}^{\mathcal{M}}(t_k,r_{k+1}],
\end{cases}
\end{equation}
and
\begin{equation}
({\bar\Theta}_2 y)(\tau)=
\begin{cases}
 \int_0^\tau \mathcal{G} (\tau,s) \rho(s)ds,\hspace{.7cm} \tau \in [0,r_1];\\
 0,\hspace{2.8cm} \tau\in \mathop{\cup}\limits_{k=1}^{\mathcal{M}}(r_k,t_k]; \\
  \int_{t_k}^\tau  \mathcal{G}(\tau,s) \rho(s)ds,\hspace{.7cm} \tau\in \mathop{\cup}\limits_{k=1}^{\mathcal{M}}(t_k,r_{k+1}].
\end{cases}
\end{equation}
\begin{lemma}\label{lm3.3}
 If $(S_1),\ (S_2)$ and $(S_4)$ hold, then ${\bar\Theta}_1$ takes bounded sets into bounded sets in $\mathscr W_\beta^0$, and is a contraction on $\mathscr{W}_\beta^0$.
\end{lemma}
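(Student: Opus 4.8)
The plan is to observe first that $\bar\Theta_1$ is really a single-valued map: for a fixed $y$ the formula defining $\bar\Theta_1 y$ does not involve the selection $\rho$, so $\bar\Theta_1 y$ is a singleton, which is automatically closed, convex and bounded, and the Hausdorff distance $\mathfrak{Z}_d$ between two such singletons coincides with the $\|\cdot\|_\beta$-distance of the corresponding functions. Thus it suffices to establish two estimates, each carried out separately on the three kinds of subintervals $[0,r_1]$, $(r_k,t_k]$ and $(t_k,r_{k+1}]$ ($k=1,\dots,\mathcal{M}$): \emph{(a)} for each $\varkappa>0$ there is $C(\varkappa)<\infty$ with $\sup_{\tau\in\mathfrak{J}}\mathbb{E}\|(\bar\Theta_1 y)(\tau)\|^2\le C(\varkappa)$ for all $y\in D_\varkappa$; and \emph{(b)} there is $\gamma_0\ge 0$ with $\mathbb{E}\|\bar\Theta_1 y^1-\bar\Theta_1 y^2\|_\beta^2\le\gamma_0\,\mathbb{E}\|y^1-y^2\|_\beta^2$ for all $y^1,y^2\in\mathscr{W}_\beta^0$, after which $\bar\Theta_1$ is a contraction once the structural constants are small enough that $\gamma_0<1$. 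Throughout I would use the elementary bound $\|a_1+a_2+a_3\|^2\le 3(\|a_1\|^2+\|a_2\|^2+\|a_3\|^2)$, the a priori estimate $\|y_\tau+\overline{\eta}_\tau\|_{\mathscr{W}}^2\le\varkappa^*$ recorded just before the statement, Lemma~\ref{lemma2.1} together with \eqref{3.1}, and Lemma~\ref{lemma2.3} for the stochastic convolution.

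For \emph{(a)}, on $[0,r_1]$ I would bound the first two terms by $\mathrm{M}^2\mathbb{E}\|\eta(0)\|^2$ and $\mathrm{M}^2\mathbb{E}\|\xi\|^2$ using $(S_1)$ and the finite second moments of $\eta,\xi$, and the stochastic convolution by $c(H)\mathrm{M}^2\beta^{2H}\sup_{s\in\mathfrak{J}}\mathbb{E}\|q(s,y_s+\overline{\eta}_s)\|_{L_2^0}^2$ via Lemma~\ref{lemma2.3}, which is finite by $(S_4)(ii)$ and monotonicity of $m_q$ (indeed dominated by $c(H)\mathrm{M}^2\beta^{2H}m_q(\varkappa^*)\sup_{s\in\mathfrak{J}}m(s)$). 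On $(r_k,t_k]$ the value is $f_k(\tau,y_\tau+\overline{\eta}_\tau)$, whose second moment is at most $\gamma_k(1+\varkappa^*)$ by $(S_2)$. On $(t_k,r_{k+1}]$ I would combine the $(S_1)$ bound on $\mathcal{E},\mathcal{G}$, the growth bounds $\gamma_k(1+\varkappa^*)$ and $c_k(1+\varkappa^*)$ for $f_k,g_k$, and the same convolution estimate. Taking the supremum over $\tau\in\mathfrak{J}$ and the maximum over $k$ produces a finite $C(\varkappa)$; together with the routine continuity of $\mathcal{E},\mathcal{G},f_k,g_k,q$ and of the stochastic convolution this also shows $\bar\Theta_1 y\in PC(\mathfrak{Z})\subset\mathscr{W}_\beta^0$, so $\bar\Theta_1$ maps $\mathscr{W}_\beta^0$ into itself and bounded sets into bounded sets.

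For \emph{(b)}, fix $y^1,y^2\in\mathscr{W}_\beta^0$ and subtract. On $[0,r_1]$ the deterministic terms $\mathcal{E}(\tau,0)\eta(0)$ and $\mathcal{G}(\tau,0)\xi$ cancel, so Lemma~\ref{lemma2.3}, the Lipschitz bound $M_q$ of $(S_4)(i)$, and Lemma~\ref{lemma2.1} (using $y^1_0=y^2_0=0$, which kills the $L_\beta$-term) give the contribution $c(H)\mathrm{M}^2\beta^{2H}M_q K_\beta^2$. On $(r_k,t_k]$ the Lipschitz bound for $f_k$ in $(S_2)$ and Lemma~\ref{lemma2.1} give the constant $\gamma_k K_\beta^2$. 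On $(t_k,r_{k+1}]$ I would split into the three pieces, use the Lipschitz constants $\gamma_k$ (for $f_k$), $c_k$ (for $g_k$) and $M_q$ (for $q$), the operator bound $\mathrm{M}$, Lemma~\ref{lemma2.3} and Lemma~\ref{lemma2.1}, obtaining $3K_\beta^2\mathrm{M}^2\big(\gamma_k+c_k+c(H)\beta^{2H}M_q\big)$. Setting $\gamma_0$ to be the maximum of these constants over $k$ yields $\mathbb{E}\|\bar\Theta_1 y^1-\bar\Theta_1 y^2\|_\beta^2\le\gamma_0\,\mathbb{E}\|y^1-y^2\|_\beta^2$, so $\bar\Theta_1$ is a contraction provided $\gamma_k,c_k,M_q$ are small enough relative to $\mathrm{M},K_\beta,\beta,H$ that $\gamma_0<1$.

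The only genuinely delicate step is handling the stochastic convolution: one must invoke the Rosenblatt-process second-moment estimate of Lemma~\ref{lemma2.3} on each subinterval (with $\tau^{2H}$ replaced by $(\tau-t_k)^{2H}\le\beta^{2H}$ on $(t_k,r_{k+1}]$) and then pass, via Lemma~\ref{lemma2.1} and \eqref{3.1}, from the phase-space norm $\|y_s+\overline{\eta}_s\|_{\mathscr{W}}$ to $\sup_{\tau\in\mathfrak{J}}\mathbb{E}\|y(\tau)\|^2$, which is exactly what $\|\cdot\|_\beta$ controls on $\mathscr{W}_\beta^0$. A secondary point to keep in mind is that on $(r_k,t_k]$ the impulse term $f_k$ carries no $\mathcal{E}$ or $\mathcal{G}$ factor, so its contribution $\gamma_k K_\beta^2$ to $\gamma_0$ is not damped by the operator bound and is typically the binding constraint for $\gamma_0<1$; the remainder of the argument is just the bookkeeping of collecting constants.
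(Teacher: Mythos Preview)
Your proposal is correct and follows essentially the same approach as the paper: both split the analysis over the three subinterval types $[0,r_1]$, $(r_k,t_k]$, $(t_k,r_{k+1}]$, invoke $(S_1)$, $(S_2)$, $(S_4)$, Lemma~\ref{lemma2.1} together with \eqref{3.1}, and Lemma~\ref{lemma2.3}, first to establish boundedness via the growth conditions and then the contraction via the Lipschitz constants, arriving at a final constant (your $\gamma_0$, the paper's $M_0$) that must be assumed to be strictly less than $1$. Apart from cosmetic differences in constant tracking (factors of $Tr(Q)$, $\mathrm{M}$ versus $\mathrm{M}^2$, and the factor $4$ coming from \eqref{3.1}), the arguments coincide.
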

\begin{proof}   \textbf{Claim 1: ${\bar\Theta}_1$ maps bounded sets to bounded sets in} $\mathscr W_\beta^0$.\\
 Let $ y \in D_\varkappa(0,\mathscr W_\beta^0)$, then by $(S_1),\ (S_2)$ and $(S_4)$, for $\tau \in [0,r_1]$, we get,
 \begin{align*}
\mathbb{E}\|y(\tau)\|^2_\mathfrak{Z} \ \leq
&\ 3M\big[\mathbb{E}\|\eta\|^2_\mathfrak{Z}+\mathbb{E}\|\xi \|^2_\mathfrak{Z}+ r_1^{2H}c(H)Tr(Q) \int_0^{r_1} m(s)m_q (\|y_s+\overline{\eta}_s\|^2_{\mathscr W})ds\big] \\ \leq
&\ 3M\big[\mathbb{E}\|\eta\|^2_\mathfrak{Z}+\mathbb{E}\|\xi \|^2_\mathfrak{Z}+ r_1^{2H}c(H)Tr(Q) m_q(\varkappa^*)\|m(\tau)\|_{L^1}:=s_0.
\end{align*}
For any $\tau \in (r_k,t_k],\ k=1,2,\hdots,\mathcal{M},$ 
\begin{align*}
\mathbb{E}\|y(\tau)\|^2_\mathfrak{Z}\   \leq& \  \mathbb{E}\| f_k(\tau,y_\tau+\overline{\eta}_\tau)\|^2_\mathfrak{Z}\  
 \leq \  \gamma_k (\|y_\tau+\overline{\eta}_\tau\|^2_\mathscr W+1) \leq \  \gamma_k (\varkappa^*+1):= \zeta_k.
\end{align*}
Similarly, for $\tau \in (t_k,r_{k+1}],\ k=1,2,\hdots,\mathcal{M}$, compute
\begin{align*}
\mathbb{E}\|y(\tau)\|^2_\mathfrak{Z}  \leq&\ 3[\mathbb{E}\| \mathcal{E}(\tau,t_k)f_k(t_k,y_{t_k}+\overline{\eta}_{t_k})\|^2_\mathfrak{Z}+\mathbb{E}\| \mathcal{G}(\tau,t_k)g_k(t_k,y_{t_k}+\overline{\eta}_{t_k})\|^2_\mathfrak{Z}\\
 &\ +\mathbb{E}\| \int_{t_k}^\tau \mathcal{G}(\tau,s)q(s,y_s+\overline{\eta}_s)dZ_H(s)\|^2_\mathfrak{Z} \\
  \leq& \ 3 M\big\{(\gamma_k +c_k) (\|y_{t_k}+\overline{\eta}_{t_k}\|^2_\mathscr W+1)+(r_{k+1}-t_k)^{2H}c(H)M \ Tr(Q) \int_{t_k}^\tau m(s) m_q (\|y_s+\overline{\eta}_s\|^2_\mathscr W) ds \big\}  \\
  \leq& \ 3 M\big\{(\gamma_k +c_k) (\varkappa^*+1)+ (r_{k+1}-t_k)^{2H}c(H)M \ Tr(Q) m_q(\varkappa^*)\|m(\tau)\|_{L^1}:= s_k.
\end{align*}
Set $\mathcal{N}= \mathop{\max}\limits_{0 \leq k \leq \mathcal{M}} \{s_k\} + \mathop{\max}\limits_{1 \leq k \leq \mathcal{M}}\{\zeta_k\} $, we get $\|{\bar{\Theta}}_1\|^2_\mathfrak{Z} \  \leq \ \mathcal{N}$.\\
  \textbf{Claim 2: $\bar \Theta_1$ is a contraction on} $\mathscr W_\beta^0$.\\
 Let $\chi^*,~\chi^{**}\in \mathscr{W}_\beta^0$. Then for $\tau \in [0,r_1]$, we have
\begin{align*}
 \mathbb{E}\|({\bar\Theta}_1\chi^*)(\tau)-({\bar\Theta}_1\chi^{**})(\tau)\|^2_\mathfrak{Z}=&\ \mathbb{E}\|\int_0^\tau \mathcal{G} (\tau,s)[q(s,\chi^*_s+\overline{\eta}_s)-q(s,\chi^{**}_s+\overline{\eta}_s)]dZ_H(s)\|^2_\mathfrak{Z}.
 \end{align*}
 Using Lemma~\ref{lemma2.1}, \ref{lemma2.3},   and  $(S_4)(i)$, we obtain
\begin{align*}
 \mathbb{E}\|({\bar\Theta}_1\chi^*)(\tau)-({\bar\Theta}_1 \chi^{**})(\tau)\|^2_\mathfrak{Z}\leq& \ c(H)M\tau^{2H} \mathbb{E}\|q(s,\chi^*_s+\overline{\eta}_s)-q(s,\chi^{**}_s+\overline{\eta}_s)\|^2_{L_2^0} \\ \leq& \ c(H)M\beta^{2H}Tr(Q)M_q\|\chi^*_s-\chi^{**}_s\|^2_\mathscr W \\ \leq& \ 2K_\beta^2c(H)M\beta^{2H}Tr(Q)M_q \mathop{\sup}\limits_{s\in \mathfrak{J}}\mathbb{E}\|\chi^*(s)-\chi^{**}(s)\|^2_\mathbb {Y}\\=&\  2K_\beta^2c(H)M\beta^{2H}Tr(Q)M_q \|\chi^*-\chi^{**}\|^2_{PC}
\end{align*}
Further, for $\tau \in \mathop{\cup}\limits_{i=1}^{\mathcal{M}}(r_k,t_k]$, using Lemma \ref{lemma2.1} and $(S_2)$(i), we have
\begin{align*}
\mathbb{E}\|({\bar\Theta}_1\chi^*)(\tau)-({\bar\Theta}_1 \chi^{**})(\tau)\|^2_\mathfrak{Z}\leq &\ \mathbb{E}\|f_k(\tau,\chi^*_\tau+\overline{\eta}_\tau)-f_k(\tau,\chi^{**}_\tau+\overline{\eta}_\tau)\|^2_\mathbb {Y}\\ \leq &\ \gamma_k \|\chi^*_\tau-\chi^{**}_\tau\|^2_\mathscr {W}\\ \leq &\ 4\gamma_k K_\beta^2 \mathop{\sup}\limits_{s\in \mathfrak{J}}\mathbb{E}\|\chi^*(s)-\chi^{**}(s)\|^2_\mathbb {Y}\\ \leq &\ 4\gamma_k K_\beta^2 \|\chi^*-\chi^{**}\|^2_{PC}.
\end{align*}
Lastly, for $\tau \in \mathop{\cup}\limits_{i=1}^{\mathcal{M}}(t_k,r_{k+1}]$,
\begin{align*}
\mathbb{E}\|({\bar\Theta}_1\chi^*)(\tau)-({\bar\Theta}_1 \chi^{**})(\tau)\|^2_\mathfrak{Z}\leq &\ 3\|\mathcal{E}(\tau,t_k)\|^2_\mathfrak{Z}\mathbb{E}\|f_k(t_k,\chi^*_{t_k}+\overline{\eta}_{t_k})-f_k(t_k,\chi^{**}_{t_k}+\overline{\eta}_{t_k})\|^2_\mathbb {Y}\\&+3 \|\mathcal{G}(\tau,t_k)\|^2_\mathfrak{Z}\|g_k(t_k,  \chi^*_{t_k}+\overline{\eta}_{t_k})-g_k(t_k,  \chi^*_{t_k}+\overline{\eta}_{t_k})\|^2_\mathfrak{Z}\\&+3 \mathbb{E}\| \int_{t_k}^\tau \mathcal{G} (\tau,s) [q(s,\chi^*_s+\overline{\eta}_s)-q(s,\chi^{**}_s+\overline{\eta}_s)]dZ_H(s)\|^2_\mathfrak{Z}\\ \leq &\ 3M\gamma_k \|\chi^*_{t_k}-\chi^{**}_{t_k}\|^2_\mathscr {W}+3Mc_k \|\chi^*_{t_k}-\chi^{**}_{t_k}\|^2_\mathscr {W}+3c(H)Tr(Q)MM_q \beta^{2H}\|\chi^*_s-\chi^{**}_s\|^2_\mathscr {W}\\ \leq &\ 12 MK_\beta^2[\gamma_k+c_k+\beta^{2H}c(H)Tr(Q)M_q]\|\chi^*-\chi^{**}\|^2_{PC}.
\end{align*}
Thus for $\tau \in \mathfrak{J}$,
\begin{align}\label{inequality}
\mathbb{E}\|({\bar\Theta}_1\chi^*)(\tau)-({\bar\Theta}_1 \chi^{**})(\tau)\|^2_\mathfrak{Z}\leq &\ M_0 \|\chi^*-\chi^{**}\|^2_{PC}.
\end{align}
\begin{align*}
\text{where} \ M_0= \ \mathop{\max}\limits_{1 \leq k \leq \mathcal{M}} 4K_\beta^2[(1+3M)\gamma_k+3M(c_k+\beta^{2H}c(H)Tr(Q)M_q)]<1.
\end{align*}
Hence $\bar \Theta_1$ is a contraction on $\mathscr W_\beta^0$.
\end{proof}
 \begin{lemma}\label{lm3.4}
 If $(S_1)$ and $(S_3)$ hold, then $\bar \Theta_2$ has convex, compact values, and also is completely continuous.
 \end{lemma}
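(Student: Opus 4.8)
The plan is to verify the three properties in order: convex and compact values, complete continuity (i.e.\ $\bar\Theta_2$ maps bounded sets to relatively compact sets), and upper semicontinuity via the closed-graph criterion (a closed-graph map with relatively compact range is u.s.c.). The operator $\bar\Theta_2 y$ is obtained by composing the linear integral operator $\rho\mapsto\int_{t_k}^{\tau}\mathcal{G}(\tau,s)\rho(s)\,ds$ with the selection set $S_{\Sigma,y}$, so most structural properties will be inherited from the selection set and the linear operator.

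First I would treat the convexity and compactness of values. By Lemma~\ref{lemma3.1}, the set $S_{\Sigma,y}$ is nonempty with convex, weakly compact values under $(S_3)$. Define $F:\mathscr{L}^2(\mathfrak{J},\mathfrak{Z})\to C(\mathfrak{J},\mathfrak{Z})$ by $(F\rho)(\tau)=\int_{t_k}^{\tau}\mathcal{G}(\tau,s)\rho(s)\,ds$ on each subinterval (and $0$ on the impulse intervals and on $\mathfrak{J}_0$); then $F$ is linear and continuous, and by $(S_1)$ (compactness of $\mathcal{G}(\tau,s)$) together with an Arzel\`a--Ascoli / Ascoli-type argument it is in fact compact. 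Convexity of $\bar\Theta_2 y$ follows because $F$ is linear and $S_{\Sigma,y}$ is convex. For compactness of values: a bounded sequence in $\bar\Theta_2 y$ arises from a sequence $\rho_n\in S_{\Sigma,y}$, which is bounded in $\mathscr{L}^2$ by $(S_3)(iii)$, hence has a weakly convergent subsequence $\rho_n\rightharpoonup\rho$ with $\rho\in S_{\Sigma,y}$ (weak closedness of the selection set); since $F$ is a compact linear operator it maps this weakly convergent subsequence to a norm-convergent one, giving $F\rho_n\to F\rho\in\bar\Theta_2 y$.

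Next, complete continuity. Fix the ball $D_\varkappa(0,\mathscr{W}_\beta^0)$; as recorded just before the lemma, $\|y_s+\overline{\eta}_s\|_{\mathscr W}^2\le\varkappa^*$ there. Uniform boundedness of $\bar\Theta_2(D_\varkappa)$ is immediate from $(S_1)$, $(S_3)(iii)$, H\"older's inequality and $b_1\in L^1$: $\mathbb{E}\|(\bar\Theta_2 y)(\tau)\|^2\le \mathrm{M}\,\beta\int_{t_k}^{\tau}(b_1(s)+b_2\varkappa^*)\,ds$. For relative compactness I would use Arzel\`a--Ascoli on $PC(\mathfrak Z)$: equicontinuity on each subinterval follows by splitting $\int_{t_k}^{\tau_2}-\int_{t_k}^{\tau_1}$ into a piece on $[\tau_1,\tau_2]$ (small by absolute continuity of the integral) and a piece on $[t_k,\tau_1]$ where one exploits strong continuity of $(\tau,s)\mapsto\mathcal{G}(\tau,s)$; the $\varepsilon$-near-$t_k$ trick and compactness of $\mathcal{G}(\tau,s)$ give relative compactness of the section $\{(\bar\Theta_2 y)(\tau): y\in D_\varkappa\}$ in $\mathfrak Z$ for each fixed $\tau$. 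This is the routine but somewhat lengthy part.

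Finally, upper semicontinuity. Since $\bar\Theta_2$ has been shown to be completely continuous (locally compact range), it suffices to show it has a closed graph. This is exactly what Lemma~\ref{lemma3.2} delivers: with the linear continuous operator $F$ above, $F\circ S_\Sigma$ has closed graph in $C(\mathfrak J,\mathfrak Z)\times C(\mathfrak J,\mathfrak Z)$; one checks that if $y_n\to y$ in $\mathscr W_\beta^0$ and $\bar\sigma_n\in\bar\Theta_2 y_n$ with $\bar\sigma_n\to\bar\sigma$, then writing $\bar\sigma_n=F\rho_n$ with $\rho_n\in S_{\Sigma,y_n}$ and passing to the limit via Lemma~\ref{lemma3.2} gives $\bar\sigma\in\bar\Theta_2 y$. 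The main obstacle is the equicontinuity/relative-compactness estimate for $\bar\Theta_2(D_\varkappa)$ at and near the impulse points $t_k$, where one must combine the absolute continuity of $s\mapsto\int b_1(s)\,ds$, the compactness hypothesis on $\mathcal{G}$, and the $PC$-structure carefully so that the Arzel\`a--Ascoli argument goes through on each subinterval simultaneously; the convexity and closed-graph parts are comparatively direct given Lemmas~\ref{lemma3.1} and~\ref{lemma3.2}.
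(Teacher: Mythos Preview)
Your plan is correct and follows essentially the same route as the paper: convexity of values from convexity of $S_{\Sigma,y}$ (Lemma~\ref{lemma3.1}) and linearity of the integral operator; complete continuity via Arzel\`a--Ascoli, established through a uniform bound from $(S_3)(iii)$, equicontinuity from the splitting of $\int_{t_k}^{\tau_2}-\int_{t_k}^{\tau_1}$ together with compactness of $\mathcal G(\tau,s)$, and pointwise relative compactness via the $\epsilon$-shift trick $\hat\sigma_\epsilon(\tau)=\int_{t_k}^{\tau-\epsilon}\mathcal G(\tau,s)\rho(s)\,ds$.

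Two minor remarks. First, your argument for \emph{compactness of values}---pass to a weakly convergent subsequence in the weakly compact set $S_{\Sigma,y}$ and use that the linear integral operator $F$ is compact---is actually more explicit than what the paper does; the paper's proof of Lemma~\ref{lm3.4} never isolates compactness of values as a separate step, effectively recovering it from complete continuity together with the closed-graph property. Second, your final section on upper semicontinuity via the closed-graph criterion (Lemma~\ref{lemma3.2}) is not part of Lemma~\ref{lm3.4} in the paper at all: the authors separate it out as the next lemma (Lemma~\ref{lm3.5}), so you are proving a bit more than the statement asks. Neither of these is an error; your organization is arguably tighter.
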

 \begin{proof}
\textbf{Claim 1: $\bar \Theta_2$ is convex for each} $\chi \in \mathscr W_\beta^0$.\\
If $\hat{\sigma}_1,~ \hat{\sigma}_2\in \bar \Theta_2 \chi$, then there are $\rho_1,~ \rho_2 \in S_{\Sigma,y}$ satisfying
 for any $\tau \in [t_k,r_{k+1}],\ k=0,1,\hdots,\mathcal{M}$
\begin{align*}
\hat{\sigma}_l(\tau)=   \int_{t_k}^\tau  \mathcal{G}(\tau,s) \rho_l(s)ds,~ l=1,2.
\end{align*}
Let $0\leq \lambda \leq 1$, then
$[\lambda \hat{\sigma}_1 +(1-\lambda) \hat{\sigma}_2](\tau)= \int_{t_k}^\tau  \mathcal{G}(\tau,s) [\lambda \rho_1(s)+(1-\lambda)\rho_2(s)] ds$.\\
In view of Lemma~\ref{lemma3.1}, $S_{\Sigma,y}$ is convex, we have
$\lambda \hat{\sigma}_1 +(1-\lambda) \hat{\sigma}_2 \in \bar \Theta_2 \chi$.\\
 \textbf{Claim 2: $\bar \Theta_2$ takes bounded sets into bounded sets in} $\mathscr W_\beta^0$.\\
 It is sufficient to establish that there exists $\mathcal{L}>0$ with the end goal that for each
 $\hat{\sigma}\in \bar \Theta_2 y,$ $ y \in D_\varkappa(0,\mathscr W_\beta^0)$, one has $\|\hat{\sigma}\|^2_{PC}\leq \mathcal{L}$.
 If $\hat{\sigma}\in {\bar \Theta}_2 y$, then there exists $\rho \in S_{\Sigma,y}$ for $\tau \in [t_k,r_{k+1}],\ k=0,1,\hdots,\mathcal{M}$ such that
 \begin{center}
 $\hat{\sigma}(\tau)= \int_{t_k}^\tau  \mathcal{G}(\tau,s) \rho(s)ds$.
  \end{center}
  Now, for $y \in D_\varkappa(0,\mathscr W_\beta^0)$,
\begin{align*}
\mathbb{E}\|\hat{\sigma}(\tau)\|^2_{\mathbb Y} =\ \mathbb{E}\|\int_{t_k}^\tau  \mathcal{G}(\tau,s) \rho(s)ds\|^2_{\mathbb Y} \leq& \  \beta M \int_{t_k}^\tau\mathbb{E}\| \rho(s)\|^2_{\mathbb Y}ds\\ \leq& \ \beta M \int_{t_k}^\tau[b_1(s)+b_2\mathbb{E}\|y(s)+\bar \eta (s) \|^2_{\mathscr W}]ds \\ \leq& \ \beta M [\|b_1\|_{L^1(\mathfrak{J},\mathds{R}^+)}+2\beta b_2(\varkappa^*+\|\eta\|^2_{\mathscr{W}})]=\mathcal{L}.
\end{align*}
Thus for each $ y \in D_\varkappa(0,\mathscr W_\beta^0)$, we have
$ \|\hat{\sigma}\|^2_{PC}\leq \mathcal{L}.$

\noindent
\textbf{Claim 3: $\bar \Theta_2$ maps bounded sets into equicontinuous sets of} $\mathscr W_\beta^0$.\\
For every $y \in D_\varkappa(0,\mathscr W_\beta^0),$ $ \hat{\sigma} \in  \bar \Theta_2 y,$ there exists  $\rho \in S_{\Sigma,y}$
such that for $\tau \in [t_k,r_{k+1}],$ $ k=0,1,\hdots,\mathcal{M}$
\begin{center}
$\hat{\sigma}(\tau)=   \int_{t_k}^\tau  \mathcal{G}(\tau,s) \rho(s)ds.$
\end{center}
For $\tau,\ \tau+\varsigma \in [t_k,r_{k+1}],\ k=0,1,\hdots,\mathcal{M},\ 0<|\varsigma|<\delta,\ \delta>0$,
\begin{align*}
\mathbb{E}\|\hat{\sigma}(\tau+\varsigma)-\hat{\sigma}(\tau)\|^2_{\mathbb Y} \leq& \ 2\mathbb{E}\|\int_{t_k}^\tau  [\mathcal{G}(\tau+\varsigma,s)-  \mathcal{G}(\tau,s)] \rho(s)ds\|^2_{\mathbb Y}+2\mathbb{E}\|\int_{\tau}^{\tau+\varsigma} \mathcal{G}(\tau+\varsigma,s) \rho(s)ds\|^2_{\mathbb Y}\\ \leq & \  2 (\tau-t_k) \int_{t_k}^\tau \|\mathcal{G}(\tau+\varsigma,s)-  \mathcal{G}(\tau,s)\|^2_{\mathbb Y} \mathbb{E}\| \rho(s)\|^2_{\mathbb Y}ds+2 \tau \int_{\tau}^{\tau+\varsigma}\|\mathcal{G}(\tau+\varsigma,s)\|^2_{\mathbb Y}\mathbb{E}\| \rho(s)\|^2_{\mathbb Y}ds \\ \leq & 2(r_{k+1}-t_k)\{\|b_1\|_{L^1(\mathfrak{J},\mathds{R}^+)}+2(r_{k+1}-t_k)b_2(\varkappa^*+\|\eta\|^2_{\mathscr{W}})\}\\&\mathop{\sup}\limits_{s \in [t_k,r_{k+1}]}\|\mathcal{G}(\tau+\varsigma,s)-  \mathcal{G}(\tau,s)\|^2_{\mathbb Y} +2\tau M(\|b_1\|_{L^1(\mathfrak{J},\mathds{R}^+)}+b_2 \varkappa^* \tau).
\end{align*}
The compactness of the operator $\mathcal{G}(\tau,s)$ yields the continuity in the uniform operator topology.
Thus $\mathbb{E}\|\hat{\sigma}(\tau+\varsigma)-\hat{\sigma}(\tau)\|^2_{\mathbb Y} \rightarrow 0$ uniformly independently of $y \in D_\varkappa(0,\mathscr W_\beta^0)$ as $\varsigma \rightarrow 0$.
Hence our claim holds.
\textbf{Claim 4: $\bar \Theta_2$ is a compact set-valued map}.

\noindent
We now assert that $\bar \Theta_2$ maps $D_\varkappa(0,\mathscr W_\beta^0)$ into a precompact set in $\mathfrak{Z}$.
That is, the set $\triangle(\tau)=\{\hat{\sigma}(\tau),~ \hat{\sigma} \in  \bar \Theta_2 D_\varkappa(0,\mathscr W_\beta^0)\}$
 is relatively compact in $\mathfrak{Z}$. For $\tau=0,$ $ \bar \Theta_2 y=0$ is compact.  
 
 If $\tau\in [t_k,r_{k+1}],$ $ k=0,1,\cdots, n $ then for each $y \in D_\varkappa(0,\mathscr W_\beta^0)$ and $\hat{\sigma}(\tau)\in \bar \Theta_2 y,$ there exists $\rho \in S_{\Sigma,y}$ in order that
\begin{center} $\hat{\sigma}(\tau)=\int_{t_k}^\tau \mathcal{G}(\tau,s) \rho(s)ds$.\end{center}
Let $0<\epsilon<\tau$.
Define $\hat{\sigma}_{\epsilon}(\tau)= \int_{t_k}^{\tau-\epsilon}\mathcal{G}(\tau,s) \rho(s)ds= \mathcal{G}(\tau,\tau-\epsilon)\int_{t_k}^{\tau-\epsilon}\mathcal{G}(\tau-\epsilon,s) \rho(s)ds.$

By $(S_1)$, $\mathcal{G}(\tau,s);\ 0 < s \leq \tau  $ is compact. From the boundedness of $\int_{t_k}^{\tau-\epsilon}\mathcal{G}(\tau-\epsilon,s) \rho(s)ds$, we acquire that $\triangle_\epsilon (\tau)=\{\hat{\sigma}(\tau),\ \hat{\sigma} \in  \bar \Theta_2 D_\varkappa(0,\mathscr W_\beta^0)\}$ is relatively compact in $\mathfrak{Z}$.

Furthermore, for $\hat{\sigma}_{\epsilon} \in \bar \Theta_2 D_\varkappa(0,\mathscr W_\beta^0)\}$, we obtain
\begin{align*}
\mathbb{E}\|\hat{\sigma}(\tau)-\hat{\sigma}_\epsilon(\tau)\|^2
\leq& \ \epsilon\int_{\tau-\epsilon}^\tau \mathbb{E}\|\mathcal{G}(\tau,s)\rho(s)\|^2_{\mathbb Y}ds\\ \leq & \ \epsilon M \int_{\tau-\epsilon}^\tau [b_1(s)+b_2(\varkappa^*+\|\eta\|^2_{\mathscr W})]ds\\ \leq & \ \epsilon M[\epsilon\|b_1\|_{L^1(\mathfrak{J},\mathds{R}^+)}+\epsilon b_2 (\varkappa^*+\|\eta\|^2_{\mathscr W})]\\
\rightarrow &\ 0, ~\mbox{for sufficiently small positive}\  \epsilon.
\end{align*}
Thus, we have precompact sets that are arbitrarily close to $\triangle(\tau)$. Hence $\triangle(\tau),\ \tau>0$ is totally bounded.
Considering  \textbf{Claim 3} and the Arzela--Ascoli theorem, we infer that $\bar \Theta_2 $ is a compact operator (completely continuous).
 \end{proof}
 \begin{lemma}\label{lm3.5}
 If $(S_1)$ and $(S_3)$ hold, then  $\bar \Theta_2$ has a closed graph.
 \end{lemma}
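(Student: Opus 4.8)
The plan is to deduce the closed-graph property of $\bar\Theta_2$ from the Lasota--Opial type result recorded in Lemma~\ref{lemma3.2}. Let $y_n \to y_*$ in $\mathscr{W}_\beta^0$ and let $\hat\sigma_n \in \bar\Theta_2 y_n$ with $\hat\sigma_n \to \hat\sigma_*$ in $\mathscr{W}_\beta^0$; the goal is to show $\hat\sigma_* \in \bar\Theta_2 y_*$. By the definition of $\bar\Theta_2$, for each $n$ there is $\rho_n \in S_{\Sigma,y_n}$ with $\hat\sigma_n(\tau)=0$ on $\mathfrak{J}_0 \cup \bigcup_{k=1}^{\mathcal{M}}(r_k,t_k]$ and $\hat\sigma_n(\tau)=\int_{t_k}^\tau \mathcal{G}(\tau,s)\rho_n(s)\,ds$ for $\tau \in [t_k,r_{k+1}]$, $k=0,1,\dots,\mathcal{M}$. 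On the impulse intervals the conclusion $\hat\sigma_* \equiv 0 \in \bar\Theta_2 y_*$ is immediate, so it suffices to argue on one fixed subinterval $[t_k,r_{k+1}]$ at a time and paste the results.

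On $[t_k,r_{k+1}]$ I would introduce the operator $F_k:\mathscr{L}^2([t_k,r_{k+1}],\mathfrak{Z}) \to C([t_k,r_{k+1}],\mathfrak{Z})$, $(F_k\rho)(\tau)=\int_{t_k}^\tau \mathcal{G}(\tau,s)\rho(s)\,ds$. Linearity is clear, and boundedness follows from $(S_1)$ together with the Cauchy--Schwarz inequality, exactly as in Claim~2 of Lemma~\ref{lm3.4}: $\mathbb{E}\|(F_k\rho)(\tau)\|^2 \le (r_{k+1}-t_k)\,\mathrm{M}\int_{t_k}^{r_{k+1}}\mathbb{E}\|\rho(s)\|^2\,ds$, whence $\|F_k\rho\|_C \le C_k\|\rho\|_{\mathscr{L}^2}$ for a constant $C_k>0$. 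Since $(S_3)$ is assumed, Lemma~\ref{lemma3.2} applies and yields that $F_k \circ S_\Sigma$ has a closed graph in $C([t_k,r_{k+1}],\mathfrak{Z}) \times C([t_k,r_{k+1}],\mathfrak{Z})$.

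It remains to transfer the convergences into the hypotheses of Lemma~\ref{lemma3.2}. Because $y_n \to y_*$ in $\mathscr{W}_\beta^0$ and $(y_n)_0=(y_*)_0=0$, Lemma~\ref{lemma2.1} gives $\sup_{\tau\in\mathfrak{J}}\|(y_n)_\tau-(y_*)_\tau\|_{\mathscr{W}} \le K_\beta\big(\sup_{\tau\in\mathfrak{J}}\mathbb{E}\|y_n(\tau)-y_*(\tau)\|^2\big)^{1/2}\to 0$, and since $\overline{\eta}$ is fixed the same holds for $y_n+\overline{\eta}$; thus the inputs $y_n+\overline{\eta}$ converge to $y_*+\overline{\eta}$ uniformly on every $[t_k,r_{k+1}]$, which is precisely what is needed to feed $F_k \circ S_\Sigma$. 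As $\hat\sigma_n|_{[t_k,r_{k+1}]}=F_k(\rho_n)$ with $\rho_n \in S_{\Sigma,y_n}$ and $\hat\sigma_n \to \hat\sigma_*$ uniformly on $[t_k,r_{k+1}]$, the closed-graph property of $F_k \circ S_\Sigma$ furnishes a $\rho_* \in S_{\Sigma,y_*}$ with $\hat\sigma_*(\tau)=\int_{t_k}^\tau \mathcal{G}(\tau,s)\rho_*(s)\,ds$ on $[t_k,r_{k+1}]$. Carrying this out for $k=0,1,\dots,\mathcal{M}$ and using that $\hat\sigma_*$ vanishes on the impulse intervals, one obtains a single $\rho_* \in S_{\Sigma,y_*}$ (nonempty by Lemma~\ref{lemma3.1}) witnessing $\hat\sigma_* \in \bar\Theta_2 y_*$, so $\bar\Theta_2$ has a closed graph.

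The main obstacle I anticipate is the bookkeeping needed to legitimately invoke Lemma~\ref{lemma3.2} across the impulsive structure: the multimap here is $y \mapsto S_{\Sigma,y+\overline{\eta}}$ rather than $S_\Sigma$ itself, so one must observe that precomposing with the continuous affine shift $y \mapsto y+\overline{\eta}$ (and with the continuous passage from a trajectory to its $\tau$-segments) preserves closed graphs, and one must verify that convergence in the seminorm of $\mathscr{W}_\beta^0$ transfers to the uniform convergence on each closed subinterval required by the Lasota--Opial lemma. All the quantitative estimates involved are routine and parallel those already performed in Lemmas~\ref{lm3.3} and~\ref{lm3.4}.
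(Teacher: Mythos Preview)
Your proposal is correct and follows essentially the same route as the paper: both introduce the linear bounded operator $\rho \mapsto \int_{t_k}^{\cdot} \mathcal{G}(\cdot,s)\rho(s)\,ds$ on each subinterval $[t_k,r_{k+1}]$, verify its continuity from $(S_1)$, and then invoke the Lasota--Opial Lemma~\ref{lemma3.2} to obtain the closed-graph property after transferring convergence of the trajectories to convergence of the $\tau$-segments via the phase-space axioms. Your write-up is, if anything, more careful than the paper's own proof about the affine shift $y\mapsto y+\overline{\eta}$ and the piecewise gluing across the impulsive intervals.
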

 \begin{proof}
Let $w^{(n)} \rightarrow w^*,\ \hat{\sigma}_{(n)} \in \bar \Theta_2 w^{(n)},\ w^{(n)} \in D_\varkappa(0,\mathscr W_\beta^0)$ and $\hat{\sigma}_{(n)} \rightarrow  \hat{\sigma}_*$. Then using the Axiom (i), it follows that
\begin{align*}
 \|w^{(n)}_\tau -w^*_\tau \|^2_\mathscr W \leq & \ 2K_\beta^2 \mathop{\sup}\{\|w^{(n)}(s)-w^*(s) \|^2_\mathfrak{Z},\ 0\leq s \leq \tau\}+2L_\beta^2\|w^{(n)}_0 -w^*_0 \|^2_\mathscr W \\ \leq &\  2K_\beta^2 \mathop{\sup}\limits_{s \in \mathfrak{J}}\{\|w^{(n)}(s)-w^*(s) \|^2_\mathfrak{Z} \rightarrow \ 0 \ \mbox{as} \ n \rightarrow \infty.
\end{align*}
This implies $w^{(n)}_s \rightarrow w^*_s$ uniformly as $ n \rightarrow \infty$ for $s \in ]-\infty,\beta]$.

We claim that $\hat{\sigma}_* \in \bar \Theta_2 w^*.$
 For $\hat{\sigma}_{(n)} \in \bar \Theta_2 w^{(n)}$, there exists $\rho^{(n)}\in S_{\Sigma,w^{(n)}}$ such that, for $\tau \in [t_k,r_{k+1}],\ k=0,1,\hdots,\mathcal{M}$
\begin{align*}
\hat{\sigma}_{(n)}(\tau)=\int_{t_k}^\tau \mathcal{G}(\tau,s) \rho^{(n)}(s)ds.
\end{align*}
We wish to show that there is $\rho^* \in S_{\Sigma,w^*}$ that insures
\begin{align*}
\hat{\sigma}_*(\tau)=\int_{t_k}^\tau \mathcal{G}(\tau,s) \rho^*(s)ds,\ \tau \in [t_k,r_{k+1}],\ k=0,1,\hdots,\mathcal{M}.
\end{align*}
For any $\tau \in [t_k,r_{k+1}],\ k=0,1,2,\hdots \mathcal{M},$
\begin{align*}
\|\hat{\sigma}_{(n)}(\tau)-\hat{\sigma}_*(\tau)\|^2_{PC} =& \|\int_{t_k}^\tau \mathcal{G} (\tau,s)[\rho^{(n)}(s)-\rho^*(s)]ds\|^2_{PC} \\ \leq & \ (r_{k+1}-t_k)M \int_{t_k}^\tau \|\rho^{(n)}(s)-\rho^*(s)\|^2 \\ \rightarrow &\ 0 \ \mbox{as} \ n \rightarrow \infty.
\end{align*}
Consider the operator $\Delta:L^2([t_k,r_{k+1}],\mathfrak{Z}) \rightarrow C([t_k,r_{k+1}],\mathfrak{Z}),$ $k=0,1,\hdots, \mathcal{M}$,
\begin{align*}
\Delta(\rho)(\tau)=\int_{t_k}^\tau \mathcal{G}(\tau,s) \rho(s)ds.
\end{align*}
Then $ \|\Delta(\rho)\|^2  \leq \ (r_{k+1}-t_k)M \|\rho(s)\|^2.$
This shows that $\Delta$ is bounded, which implies $\Delta$ is continuous.
 Lemma \ref{lemma3.2} asserts that the operator $\Delta \circ S_\Sigma$ has a closed graph.
Moreover, by the definition of $\Delta$, for $\tau \in [t_k,r_{k+1}],\  k=0,1, \hdots, \mathcal{M},$ we get,
\begin{align*}
\hat{\sigma}_{(n)}(\tau) \in \bar{\Theta}_2(S_{\Sigma,w^{(n)}}).
\end{align*}
Since $w^{(n)}\rightarrow w^*,$ for some $\rho^* \in S_{\Sigma,w^*},$ it follows that for any $\tau \in [t_k,r_{k+1}],\ k=0,1,\hdots \mathcal{M},$ we have
\begin{align*}
\hat{\sigma}_*(\tau)=\int_{t_k}^\tau \mathcal{G}(\tau,s)\rho^*(s)ds,\  \mbox{this indicates that}\ \hat{\sigma}_* \in \bar{\Theta}w^*.
\end{align*}
This implies operator $\bar{\Theta}_2$ is closed graph.
 By utilizing Proposition 3.3.12(2) of \cite{SA}, we get that $\bar{\Theta}_2$ is
u.s.c.. Therefore $\bar{\Theta}_2$ satisfies the condition (b) of Lemma \ref{lemma2.2}.
\end{proof}

%
\begin{theorem}\label{thm3.1}
Let $(S_1)$-$(S_4)$ are fulfilled.  Then system~\eqref{mainequation} admits at least one solution on $]-\infty,\beta]$ provided that
\begin{align}\label{3.2}
M_0=\mathop{\max}\limits_{1\leq k \leq \mathcal{M}}4K_\beta^2[\gamma_k+3M(c_k+\beta^{2H}c(H)Tr(Q)M_q)]<1,\ \text{and} \nonumber \\
\int_0^\beta \max \{c_1^* m(t)+c_2^*b_2\}dt  <   \int_{\Upsilon(0)}^\infty \frac{ds}{m_q(s)} .
\end{align}
\end{theorem}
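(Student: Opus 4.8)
The plan is to verify that the decomposition $\bar\Theta=\bar\Theta_1+\bar\Theta_2$ satisfies the hypotheses of the nonlinear alternative of Lemma~\ref{lemma2.2}, and then rule out the ``unbounded set'' alternative via the a priori bound coming from the second condition in \eqref{3.2}. Lemmas~\ref{lm3.3}--\ref{lm3.5} already do most of the work: $\bar\Theta_1$ is single-valued, maps bounded sets to bounded sets, and is a contraction on $\mathscr{W}_\beta^0$ with constant $M_0<1$; and $\bar\Theta_2$ is convex-valued, completely continuous, and has a closed graph, hence is u.s.c. So conditions (a) and (b) of Lemma~\ref{lemma2.2} hold for $\bar\Theta_1,\bar\Theta_2$ restricted to $\mathscr{W}_\beta^0$, and it remains only to show that the set
\[
\mathscr{S}=\{y\in\mathscr{W}_\beta^0 : \lambda y\in\bar\Theta_1 y+\bar\Theta_2 y\ \text{for some}\ \lambda>1\}
\]
is bounded; then alternative (i) of Lemma~\ref{lemma2.2} gives a fixed point $y\in\bar\Theta y$, and setting $\chi=\bar\eta+y$ produces a mild solution of \eqref{mainequation} on $]-\infty,\beta]$, because $\bar\Theta$ having a fixed point is equivalent (as noted after the definition of $\bar\Theta$) to $\Theta$ having one, which by construction solves \eqref{mildsolution}.

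The core computation is the a priori estimate on $\mathscr{S}$. First I would fix $y\in\mathscr{S}$ with $\lambda y(\tau)=(\bar\Theta_1 y)(\tau)+(\bar\Theta_2 y)(\tau)$ for some $\lambda>1$ and some selection $\rho\in S_{\Sigma,y}$, and estimate $\mathbb{E}\|y(\tau)\|^2$ separately on each of the three interval types. On $[0,r_1]$, using $(S_1)$, Lemma~\ref{lemma2.3}, $(S_3)(iii)$ and $(S_4)(ii)$, and denoting $v(\tau)=\sup_{0\le s\le\tau}\mathbb{E}\|y(s)\|^2$, I would get an inequality of the schematic form
\[
v(\tau)\le c_0+c_1^*\int_0^\tau m(s)\,m_q\!\big(K_\beta^2 v(s)+L_\beta^2\|\eta\|_{\mathscr W}^2\big)ds+c_2^* b_2\int_0^\tau\big(K_\beta^2 v(s)+L_\beta^2\|\eta\|_{\mathscr W}^2\big)ds,
\]
after absorbing the contraction-type terms (whose coefficient is $\le M_0/\lambda^2<1$) onto the left, where $c_0$ collects the $\mathbb{E}\|\eta\|^2$, $\mathbb{E}\|\xi\|^2$ contributions and the $b_1$-integral; the analogous bounds on the impulsive intervals $(r_k,t_k]$ (where $\bar\Theta_2=0$ and $(S_2)$ controls $f_k$) and on $(t_k,r_{k+1}]$ are handled the same way, and one patches the pieces together by induction on $k$ to obtain a single inequality valid for all $\tau\in\mathfrak{J}$. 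Setting $\Upsilon(\tau)=K_\beta^2 v(\tau)+L_\beta^2\|\eta\|_{\mathscr W}^2$, monotonicity of $m_q$ and the generalized Gronwall / Bihari--LaSalle lemma then force
\[
\int_{\Upsilon(0)}^{\Upsilon(\tau)}\frac{ds}{m_q(s)}\le\int_0^\beta\max\{c_1^* m(t)+c_2^* b_2\}\,dt<\int_{\Upsilon(0)}^{\infty}\frac{ds}{m_q(s)},
\]
which bounds $\Upsilon(\tau)$, hence $v(\beta)$, hence $\|y\|_\beta^2$, by a constant independent of $y$ and $\lambda$. Thus $\mathscr{S}$ is bounded, alternative (ii) is excluded, and we are done.

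The main obstacle, and the step requiring care, is the bookkeeping that turns the piecewise estimates into a single Bihari-type inequality: because the solution is reset on each $(r_k,t_k]$ via $f_k,g_k$ and the phase-space norm $\|y_s+\bar\eta_s\|_{\mathscr W}$ mixes past and present through the constants $K_\beta,L_\beta$ of Lemma~\ref{lemma2.1}, one must be careful that the ``bad'' coefficients ($3M\gamma_k$, $3Mc_k$, $3M\beta^{2H}c(H)Tr(Q)M_q$, etc.) that feed into $M_0$ really do stay on the contraction side and that what remains on the integral side depends only on $m(\cdot)$, $b_2$ and the fixed data — so that the constants $c_1^*,c_2^*$ appearing in \eqref{3.2} are exactly the ones produced here. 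Assembling $\Upsilon(0)=K_\beta^2\sup_{\tau\in\mathfrak J}\mathbb E\|y(\tau)\|^2\big|_{\tau=0}+L_\beta^2\|\eta\|^2_{\mathscr W}$ (equivalently the obvious initial value consistent with $y_0=0$) and checking that the left endpoint of the integral matches the one written in the theorem is the remaining delicate point; everything else is a routine application of Hölder's inequality, the Itô-type isometry of Lemma~\ref{lemma2.3}, and the structural hypotheses $(S_1)$--$(S_4)$.
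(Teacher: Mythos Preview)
Your proposal is correct and follows essentially the same route as the paper: invoke Lemmas~\ref{lm3.3}--\ref{lm3.5} for conditions (a)--(b) of Lemma~\ref{lemma2.2}, then rule out alternative (ii) by deriving an a priori bound on $\{y:\lambda y\in\bar\Theta y\}$ via piecewise estimates combined into a single Bihari-type inequality for the auxiliary function $\varphi(\tau)=4L_\beta^2\|\eta\|_{\mathscr W}^2+4K_\beta^2\sup_{0\le s\le\tau}\mathbb E\|y(s)\|^2$. The only cosmetic differences are that the paper takes a uniform maximum over the subintervals rather than inducting on $k$, and absorbs the $\gamma_k,c_k$ contributions using an auxiliary constant $\widehat M_0<1$ (a consequence of $M_0<1$) rather than the $1/\lambda^2$ factor you mention.
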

\begin{proof}
We claim that the set $\mho=\{y \in \mathscr{W}_\beta^0: \lambda y \in  \bar{\Theta}y= \bar{\Theta}_1 y+ \bar{\Theta}_2y\}$ is bounded for some $\lambda > 1$ on $[0,\beta]$. Let $y \in \mathscr{W}_\beta^0$ satisfies $\lambda y \in \bar{\Theta}y= \bar{\Theta}_1 y+ \bar{\Theta}_2y$ for some $\lambda >1$  , we obtain
\begin{equation}
y(\tau)=
\begin{cases}
\frac{1}{\lambda}\Big[\mathcal{E}(\tau,0) \eta(0)+\mathcal{G}(\tau,0)\xi + \int_0^\tau \mathcal{G} (\tau,s) \rho(s)ds+\int_0^\tau \mathcal{G} (\tau,s)q(s,y_s+\overline{\eta}_s)dZ_H(s)\Big],\hspace{.7cm} \tau \in [0,r_1];\\
 \frac{1}{\lambda}f_k(\tau,y_\tau+\overline{\eta}_\tau),\hspace{8.8cm} \tau\in \mathop{\cup}\limits_{k=1}^{\mathcal{M}}(r_k,t_k]; \\
 \frac{1}{\lambda}\Big[\mathcal{E}(\tau,t_k)f_k(t_k, y_{t_k}+\overline{\eta}_{t_k}) +\mathcal{G}(\tau,t_k)g_k(t_k,  y_{t_k}+\overline{\eta}_{t_k})
+ \int_{t_k}^\tau  \mathcal{G}(\tau,s) \rho(s)ds \\+ \int_{t_k}^\tau \mathcal{G} (\tau,s) q(s,y_s+\overline{\eta}_s)dZ_H(s)\Big],\hspace{6.2cm} \tau\in \mathop{\cup}\limits_{k=1}^{\mathcal{M}}(t_k,r_{k+1}],
\end{cases}
\end{equation}
Thus for $\tau \in [0,r_1]$, we get
\begin{align*}
\mathbb{E}\|y(\tau)\|^2_\mathfrak{Z} \ \leq
&\ 4M\big[\mathbb{E}\|\eta\|^2_\mathfrak{Z}+\mathbb{E}\|\xi \|^2_\mathfrak{Z}+r_1 \int_0^{r_1} b_1(s)ds+r_1 b_2 \int_0^{r_1} \mathbb{E}\|y(s)\|^2_\mathfrak{Z} ds \\
\ &+ r_1^{2H}c(H)Tr(Q) \int_0^{r_1} m(s)m_q (\|y_s+\overline{\eta}_s\|^2_{\mathscr W})ds\big].
\end{align*}
For any $\tau \in (r_k,t_k],\ k=1,2,\hdots,\mathcal{M},$ we have
\begin{align*}
\mathbb{E}\|y(\tau)\|^2_\mathfrak{Z}  \leq&\ \mathbb{E}\| f_k(\tau,y_\tau+\overline{\eta}_\tau)\|^2_\mathfrak{Z} \\
 \leq& \ \gamma_k (\|y_\tau+\overline{\eta}_\tau\|^2_\mathscr W+1).
\end{align*}
Similarly, for $\tau \in (t_k,r_{k+1}],\ k=1,2,\hdots,\mathcal{M}$, we compute
\begin{align*}
\mathbb{E}\|y(\tau)\|^2_\mathfrak{Z}  \leq&\ 4[\mathbb{E}\| \mathcal{E}(\tau,t_k)f_k(t_k,y_{t_k}+\overline{\eta}_{t_k})\|^2_\mathfrak{Z}+\mathbb{E}\| \mathcal{G}(\tau,t_k)g_k(t_k,y_{t_k}+\overline{\eta}_{t_k})\|^2_\mathfrak{Z}\\
 &\ +\mathbb{E}\| \int_{t_k}^\tau \mathcal{G}(\tau,s)q(s,y_s+\overline{\eta}_s)dZ_H(s)\|^2_\mathfrak{Z} +\mathbb{E}\| \int_{t_k}^\tau \mathcal{G}(\tau,s)\rho(s)ds\|^2_\mathfrak{Z} \\
  \leq& \ 4 M\big\{(\gamma_k +c_k) (\|y_{t_k}+\overline{\eta}_{t_k}\|^2_\mathscr W+1)+(r_{k+1}-t_k)^{2H}c(H)M \ Tr(Q) \int_{t_k}^\tau m(s) m_q (\|y_s+\overline{\eta}_s\|^2_\mathscr W) ds \\
 &+(r_{k+1}-t_k)M \int_{t_k}^\tau[b_1(s)+b_2 \mathbb{E}\|y(s)+\overline{\eta}(s)\|^2_\mathfrak{Z}]ds\big\}
\end{align*}
Using Lemma \ref{lemma2.1}, we get
$\sup \{\|y_s+\overline{\eta}_s\|_{\mathscr{W}}^2,\ \tau \geq s \geq 0 \} \  \leq \ 4  L_\beta^2 \mathbb{E}\|\eta\|_{\mathscr{W}}^2 + 4 K_\beta^2 \sup \{\mathbb{E}\|y(s)\|^2 ,\ \tau \geq s \geq 0  \}$.

Let $\varphi(\tau)= 4  L_\beta^2 \mathbb{E}\|\eta\|_{\mathscr{W}}^2 + 4 K_\beta^2 \sup \{\mathbb{E}\|y(s)\|^2 ,\  \tau \geq s \geq 0 \},\ \tau \geq 0.$
Thus, for $\tau \in \mathfrak{J}$, we obtain
\begin{align*}
\mathbb{E}\|y(\tau)\|^2_\mathfrak{Z} \leq & \hat{M}+\gamma_k \varphi(\tau)+4M[\gamma_k \varphi (\tau)+c_k \varphi (\tau)]+4\beta^{2H} c(H)M\  Tr(Q) \int_0^\tau m(s)m_q (\varphi (s)) ds\\
&  + \beta M \int _0^\tau [b_1(s)+b_2 \varphi (s)]ds,
\end{align*}
where  $\widehat{M}= \mathop {\max}\limits_{1 \leq k \leq \mathcal{M}} [4M (\|\eta\|^2_\mathscr W + \mathbb{E}\|\xi\|^2_\mathfrak{Z}+\gamma_k +c_k)+\gamma_k].$

Also a simple calculation yields that
\begin{align*}
\varphi(\tau) \leq &  4[ L_\beta^2 \|\eta\|^2 + K_\beta^2 \hat{M}]+ 4K_\beta^2 \big\{[\gamma_k(1+4M)+4Mc_k]\varphi(\tau)\\
&+4\beta^{2H}c(H)M \ Tr(Q) \int_0^\tau m(s) m_q(\varphi(s)) ds + \beta M \int_0^\tau [b_1(s)+b_2 \varphi (s)]ds \big\}.
\end{align*}
Using the fact that $\widehat{M}_0= 4K_\beta^2 \mathop {\max}\limits_{1 \leq k \leq \mathcal{M}}\{\gamma_k (1+4M)+4Mc_k\}<1$, we obtain
\begin{align*}
\varphi(\tau) \leq \frac{1}{1-\widehat{M}_0}\{4 L_\beta^2 \|\eta\|^2_\mathscr W+4K_\beta^2 \widehat{M}\}+c_1^* \int_0^\tau m(s)m_q(\varphi(s)) ds + c_2^*\  b_2 \int_0^\tau  \varphi(s) ds,
\end{align*}
where  $c_1^* = \frac{4K_\beta^2 \beta^{2H} c(H) M \ Tr(Q)}{1-\widehat{M}_0},$                                           $ c_2^*= \frac{\beta M}{1-\widehat{M}_0}.$

Let $ c^*= \frac{1}{1-\widehat{M}_0}\{4 L_\beta^2 \|\eta\|^2_\mathscr W +4K_\beta^2 \widehat{M}+\beta M \|b_1\|_{L_1(\mathfrak{J},\mathds{R}^+)}\}.$
Then the above inequality can be rewritten as
\begin{align*}
\varphi (\tau) \leq \Upsilon (\tau) =c^*+c_1^* \int_0^\tau m(s)m_q(\varphi(s)) ds+ c_2^* b_2 \int_0^\tau \varphi(s)ds,
\end{align*}
Also $\Upsilon(0)=c^*$ and
\begin{align*}
\varphi'(\tau) \leq \  c_1^* m(t)m_q(\varphi(\tau))+c_2^*b_2 \varphi(\tau) \leq \  \max \{c_1^*m(\tau),\ c_2^* b_2\}[\Upsilon(\tau)+m_q(\Upsilon(\tau))],\ \tau \in \mathfrak{J}.
\end{align*}
Thus we get
\begin{align*}
\int_0^\tau \frac{\Upsilon'(s)}{\Upsilon(s)+m_q(\Upsilon(s))}ds\ \leq \ \int_0^\tau \frac{\Upsilon'(s)}{m_q(\Upsilon(s))} ds\ \leq \ \int_0^\beta \max \{c_1^* m(s)+c_2^*b_2\}ds.
\end{align*}
Moreover,
\begin{align*}
\int_{\Upsilon(0)}^{\Upsilon(\tau)} \frac{ds}{m_q(s)}  \leq   \int_0^\beta \max \{c_1^* m(s)+c_2^*b_2\}ds  <   \int_{\Upsilon(0)}^\infty \frac{ds}{m_q(s)}.
\end{align*}
The above inequality shows that $\Upsilon(\tau)$ is bounded. Therefore, we have $\widetilde{N}$ such that
\begin{align*}
\Upsilon(\tau)\leq \widetilde{N}, ~\tau \in \mathfrak{J}.
\end{align*}
Consequently, $\|y_\tau+\eta_\tau\|^2_\mathscr W  \leq \varphi(\tau) \leq \Upsilon(\tau)  \leq \widetilde{N},$ $ \tau \in \mathfrak{J}$,
where $\widetilde{N}$ depends on $m_q(\cdot)$ and $m(\cdot)$.
This proves that $\mho$ is bounded on $[0,\beta]$. Hence, Lemmas \ref{lm3.3}--\ref{lm3.5} and first assertion of Lemma \ref{lemma2.2} yield that
$\bar{\Theta}=\bar{\Theta}_1+\bar{\Theta}_2$ has a fixed element $y^*$ in $\mathscr{W}_\beta^0.$
Set $\chi^*(\tau)=y^* (\tau)+\bar{\eta}(\tau),$ $ \tau \in ]-\infty,\beta].$
Then $\chi^*$ is a fixed point of the operator $\Theta$. Consequently, $\chi^*$ is a mild solution of the system (\ref{mainequation}).
\end{proof}
\section{An Example}\label{Sect.4}
This section is illustrated for the applicability of the above result to a concrete stochastic partial differential inclusions
 with unbounded delay and Clarke's subdifferential given by
\begin{equation}
\begin{cases}
\frac{\partial^2}{\partial \tau^2}\chi(\tau,w) \in \frac{\partial^2}{\partial w^2}\chi(\tau,w)+v(\tau)\frac{\partial}{\partial \tau}\chi(\tau,w)+\partial \Sigma(\tau,w,\chi(\tau,w))+ \int_{-\infty}^{\tau}u(t-\tau)\tilde{u}(\tau,\chi(t,w))dZ_H(t),\\ \qquad (\tau,w)\in \mathop{\cup}\limits_{k=1}^\mathcal{M} (t_k,r_{k+1}] \times [0,\pi]; \\
 \chi(\tau,w)=\int_{-\infty}^\tau \mu_k(t-\tau)\chi(t,w)dt,\quad (\tau,w)\in \mathop{\cup}\limits_{k=1}^\mathcal{M}(r_k,t_k]\times [0,\pi];
\\
\chi(\tau,0)=\chi(\tau,\pi)=0 , \ \tau\in (0,\beta];\\
 \chi(\tau,w)=\eta(\tau,w)\in \mathscr{W},\ (\tau,w)\in ]-\infty,0]\times[0,\pi];\\
 \frac{\partial}{\partial \tau}\chi(0,w)= \chi_1(w);\\
 \frac{\partial}{\partial \tau}\chi(\tau,w)= \int_{-\infty}^\tau \tilde{\mu}_k(t-\tau)\chi(t,w)dt,\quad (\tau,w)\in \mathop{\cup}\limits_{k=1}^\mathcal{M}(r_k,t_k]\times [0,\pi];
\end{cases}
\end{equation}
where $\eta, \chi_1$ are continuous. To compose the above system in the abstract form, set $\mathfrak{Z}=\mathscr{Z}=L^2([0,\pi];\mathds{R})$ 
 Let $\mathscr{H}^2([0,\pi],\mathds{R})$ be the Sobolev space of all mappings $\chi:[0,\pi] \rightarrow \mathds{R}$ such that $\chi'' \in L^2([0,\pi],\mathds{R})$. 
 Define $\mathcal{A}:D(\mathcal{A}) \rightarrow \mathfrak{Z}$ by $\mathcal{A}\chi(\tau)=\chi''(\tau)$, where $D(\mathcal{A})=\{\chi \in \mathfrak{Z}: \chi,\  chi'$ are absolutely continuous, $\chi'' \in \mathfrak{Z},\ \chi(0)=\chi(\pi)=0\}$. Then, the cosine family $C(\tau)$ and the associated sine function $S(\tau)$ on $\mathfrak{Z}$ are generated by $\mathcal{A}$ and are strongly continuous; also for any $\tau \in \mathds{R},$ $ \|C(\tau)\| \leq 1$ \cite{TW}).
 Define $\widehat{\mathcal{C}}: \mathscr{H}^1([0,\pi],\mathds{R}) \rightarrow \mathfrak{Z}$ by $\widehat{\mathcal{C}}(\tau)\chi(w)=v(\tau)\chi'(w)$, where $v:[0,1] \rightarrow \mathds{R}$ is H\"{o}lder continuous. Define the linear operator $\mathcal{A}(\tau)= \widehat{\mathcal{C}}(\tau)+\mathcal{A}$ that is  closed also. The operator $\{\mathcal{A}(\tau): \tau \in \mathfrak{J}\}$ generates the evolution operator $ \{\mathcal{G}(\tau,s)\}_{(\tau,s) \in D},\ D=\{(\tau,s)\in \mathfrak{J} \times \mathfrak{J}: s \leq \tau\}$, see \cite{Henriquez2011}. Moreover, $\mathcal{G(\cdot,\cdot)}$ is well defined and assumption $(S_1)$ hold with $M=1$.
 
The map $\Sigma: [0,\pi]\times \mathfrak{J}\times \mathds{R} \rightarrow \mathds{R}$ is a locally Lipschitz w.r.t. the last variable which is non-smooth and non-convex. The set-valued function $\partial \Sigma (\tau,w,\phi): \mathds{R} \rightarrow 2^\mathds{R}$ is non-monotone. 
To support $(S_3)$ one can take $\Sigma(\phi)= \min \{\varpi_1(\phi),\varpi_2(\phi)\}$, where $\varpi_1,\varpi_2:\mathds{R}\rightarrow \mathds{R}$ are convex quadratic functions~\cite{SM}. Notation $Z_H(\tau)$ stands for the Rosenblatt process that is defined on the complete stochastic space $(\Omega,\Gamma,\mathbb{P})$ and $\frac{1}{2} < H < 1$ .

 Let the function $\tilde{l}: ]-\infty,0]\rightarrow \mathds{R}^+\cup\{0\}$ be measurable satisfying (g-5)-(g-7) described in \cite{Hino}. 
 
Set ${PC}_{0} \times  L^2(\widetilde{l},\mathfrak{Z}) =\big\{\Pi:\mathfrak{J}_0\rightarrow \mathfrak{Z},\ \Pi(\cdot) $ is Lebesgue measurable on $]-\infty,0)\big\}$ and
\begin{align*}
\|\Pi\|_\mathscr{W}= \|\Pi(0)\|+\Big(\int_{-\infty}^0 \tilde{l}(s)\|\Pi(s)\|^2 ds\Big)^\frac{1}{2}.
\end{align*}
The space $(\mathscr{W},\|\cdot\|_\mathscr{W})= ({PC}_0 \times L^2(\tilde{l},\mathfrak{Z}),\|\cdot\|_\mathscr{W})$ satisfies Axioms (i) and (ii), (see \cite{Hino}).

Suppose that the following conditions hold:
\begin{enumerate}
\item[(i)] Let $u:\mathds{R} \rightarrow \mathds{R},$ $ \tilde{u}:\mathds{R}^2 \rightarrow \mathds{R}$ be continuous and $L_u=\big(\int_{-\infty}^0 \frac{(u(s))^2}{\tilde{l}(s)}ds\big)^{1/2}< \infty$, also  for $(\tau,x)\in \mathds{R}^2$, 
    $|\tilde{u}(\tau,x)| \leq \tilde{b}(\tau)|x|$, $\tilde{b}:\mathds{R} \rightarrow \mathds R$ is continuous.
\item[(ii)] The functions $\mu_k, \tilde{\mu}_k:\mathds{R}^2 \rightarrow \mathds R$ are continuous and there are  mappings $a_k,\tilde{a}_k :\mathds R \rightarrow \mathds R $ which are continuous satisfying $|\mu_k(s,x)| \leq a_k(s)$ with $\mathfrak{A}_k= \big(\int_{-\infty}^0 \frac{(a_k(s))^2}{\tilde{l}(s)}ds\big)^{1/2}< \infty$, also $|\tilde{\mu}_k(s,x)| \leq \tilde{a}_k(s)$ with $\tilde{\mathfrak{A}}_k= \big(\int_{-\infty}^0 \frac{(\tilde{a}_k(s))^2}{\tilde{l}(s)}ds\big)^{1/2}< \infty$.
\end{enumerate}
Take $\eta \in \mathscr{W}$ with $\eta(\vartheta)(w)= \eta(\vartheta,w),$ $ (\vartheta,w)\in ]-\infty,0]\times \mathscr{W}$.

 Let $\chi(t)(w)=\chi(t,w)$, define $q:\mathfrak{J} \times \mathscr{W} \rightarrow L_2^0,\ f_k,g_k (r_k,t_{k+1}] \times \mathscr{W} \rightarrow \mathfrak{Z} $ as
\begin{align*}
q(\tau,\Xi)(w)= \int_{-\infty}^{0}u(t)\tilde{u}(\tau,\Xi(t)(w)dt,\\
f_k(\tau,\Xi)(w)= \int_{-\infty}^{0}\mu_k(t)\Xi(t)(w)dt,\\
g_k(\tau,\Xi)(w)= \int_{-\infty}^{0}\tilde{\mu}_k(t)\Xi(t)(w)dt.
\end{align*}
Under the above assumptions the problem (\ref{Sect.4}) can be formulated as (\ref{mainequation}).

From the hypothesis (i), for all $(\tau, \Xi) \in [0,\beta) \times \mathscr{W}$, we have
\begin{align*}
\mathbb{E}\|q(\tau,\Xi)\|^2 &= \mathbb{E}\Big[\Big(\int_0^\pi\Big(\int_{-\infty}^{0}u(t)\tilde{u}(\tau,\Xi(t)(w))dt\Big)^2dw\Big)^{\frac{1}{2}}\Big]^2\\ & \leq \mathbb{E}\Big[\Big(\int_0^\pi\Big(\int_{-\infty}^{0}u(t)\tilde{b}(\tau)|\Xi(t)(w)|dt\Big)^2dw\Big)^{\frac{1}{2}}\Big]^2\\ & \leq \mathbb{E}\Big[\tilde{b}(\tau)\Big(\int_{-\infty}^{0}\frac{(u(t))^2}{\tilde{l}(t)}dt\Big)^{\frac{1}{2}}\Big(\int_{-\infty}^{0}\tilde{l}(t)\|\Xi(t)\|^2dt\Big)^\frac{1}{2}\Big]^2\\ & \leq [\tilde{b}(\tau) L_u]^2\|\Xi\|^2_\mathscr{W}.
\end{align*}
Also for all $ (\tau, \Xi),~ (\tau, \Xi_1) \in (r_k,t_k) \times \mathscr{W}$, we get
\begin{align*}
\mathbb{E}\|g_k(\tau,\Xi)-g_k(\tau,\Xi_1)\|^2 &= \mathbb{E}\Big[\Big(\int_0^\pi\Big(\int_{-\infty}^{0}\mu_i(t,w)[\Xi(t)(w)-\Xi_1(t)(w)]dt\Big)^2dw\Big)^{\frac{1}{2}}\Big]^2\\ & \leq \mathbb{E}\Big[\Big(\int_{-\infty}^{0}\frac{(a_k(t))^2}{\tilde{l}(t)}dt\Big)^{\frac{1}{2}}\Big(\int_{-\infty}^{0}\tilde{l}(t)\|\Xi(t)-\Xi_1(t)\|^2dt\Big)^\frac{1}{2}\Big]^2\\ & \leq \Big[\mathfrak{A}_k \Big(\|\Xi(0)\|+\Big(\int_{-\infty}^{0}\tilde{l}(t)\|\Xi(t)-\Xi_1(t)\|^2dt\Big)^\frac{1}{2}\Big)\Big]^2 \\ & \leq \mathfrak{A}_k^2 \|\Xi-\Xi_1\|^2_\mathscr{W}.
\end{align*}
Similarly,
\begin{align*}
\mathbb{E}\|f_k(\tau,\Xi)-f_k(\tau,\Xi_1)\|^2 & \leq  \gamma_k\|\Xi-\Xi_1\|^2_\mathscr{W},\ \gamma_k>0,\ \mbox{for all}\ (\tau, \Xi),\ (\tau, \Xi_1) \in (r_k,t_k) \times \mathscr{W} \\
\mathbb{E}\|q(\tau,\Xi)-q(\tau,\Xi_1)\|^2 & \leq M_q  \|\Xi-\Xi_1\|^2_\mathscr{W},\ M_q>0,\ \mbox{for all}\ (\tau, \Xi),\ (\tau, \Xi_1) \in [0,\beta) \times \mathscr{W}
\end{align*}
Thus all the hypotheses in Theorem \ref{thm3.1} are followed. Hence, the model (\ref{Sect.4}) admits a solution on $\mathfrak{J}.$

\section{Conclusion} 
 In this article, we study a new class of non-autonomous second-order stochastic inclusions of Clarke's subdifferential type involving NIIs, unbounded delay, and the Rosenblatt process. The existence result is deduced by utilizing the fixed point strategy for a set-valued map. The obtained results are illustrated through a concrete example. In the future, it is interesting to study the controllability results (such as approximate controllability, optimal control, time-optimal control among others) of the associated systems. In our future research work, we will consider the optimal control problem associated with the system (\ref{mainequation}) involving state-dependent delay.

\end{document}